\newcommand{\blue}[1]{\textcolor{blue}{#1}}
\numberwithin{equation}{section}
 \def\qed{\hfill$\Box$\medskip}
 \newtheorem{theorem}{Theorem}[section]
 \newtheorem{lemma}[theorem]{Lemma}
 \newtheorem{remark}[theorem]{Remark}
 \def\<{\langle}\def\>{\rangle}
 \def\blue{\color{blue}}
 \def\beqlb{\begin{eqnarray}}\def\eeqlb{\end{eqnarray}}
 \def\beqnn{\begin{eqnarray*}}\def\eeqnn{\end{eqnarray*}}
 \def\<{\langle}\def\>{\rangle}
 \def\blue{\color{blue}}
 \def\beqlb{\begin{eqnarray}}\def\eeqlb{\end{eqnarray}}
 \def\beqnn{\begin{eqnarray*}}\def\eeqnn{\end{eqnarray*}}
\begin{document}

\noindent{(20170325)}

\bigskip\bigskip

\centerline{\LARGE\bf The Minimal Position of  }
\medskip

\centerline{\LARGE\bf a Stable Branching Random Walk}

\bigskip\bigskip

\centerline{Jingning Liu and Mei
Zhang\,\footnote{Corresponding author. Supported by NSFC
(11371061)}}

\bigskip\bigskip

\centerline{School of Mathematical Sciences  }
 \centerline{Laboratory of Mathematics and Complex Systems }
 \centerline{ Beijing Normal University }

 \centerline{Beijing 100875, People's Republic of China}

 \centerline{E-mails:\;{\tt
liujingning@mail.bnu.edu.cn
  and
meizhang@bnu.edu.cn}}
\bigskip

\bigskip

\medskip

{\narrower

\noindent\textit{\bf Abstract} In this paper, a branching random walk $(V(x))$ in the boundary case is studied, where the associated one dimensional random walk is in the domain of attraction of an $\alpha-$stable law with $1<\alpha<2$. Let $M_n$ be the minimal position of $(V(x))$ at generation $n$. We established an integral test to describe the lower limit of $M_n-\frac{1}{\alpha}\log n$ and a law of iterated logarithm for the upper limit of $M_n-(1+\frac{1}{\alpha})\log n$.
\medskip

\noindent\textit{Mathematics Subject Classification (2010).} Primary
60J80; secondary 60F05.

 \smallskip

\noindent\textit{\bf Key words and phrases} branching random walk, minimal position, additive martingale, upper limit, lower limit.

\medskip

\section{introduction}

We consider a discrete-time one-dimensional branching random walk. It starts with an initial ancestor particle located at the origin. At time $1$, the particle dies, producing a certain number of new particles. These new particles are positioned according to the distribution of the point process $\Theta$. At time $2$, these particles die, each giving birth to new particles positioned (with respect to the birth place) according to the law of $\Theta$. And the process goes on with the same mechanism. We assume the particles produce new particles independently of each other. This system can be seen as a branching tree $\textbf{T}$ with the origin as the root. For each vertex $x$ on $\textbf{T}$, we denote its position by $V(x)$. The family of the random variables $(V(x))$ is usually referred as a branching random walk (Biggins~\cite{bi}). The generation of $x$ is denoted by $|x|$.

We assume throughout the remainder of the paper, including in the statements of
theorems and lemmas, that
\begin{align}
& \mathbf{E}\Big(\sum_{|x|=1}1\Big)>1, \;\; \mathbf{E}\Big(\sum_{|x|=1}e^{-V(x)}\Big)=1,\;\; \mathbf{E}\Big(\sum_{|x|=1}V(x)e^{-V(x)}\Big)=0. \label{stable0}&
\end{align}
Condition (\ref{stable0})  means that the branching random walk $(V(x))$ is supercritical and in boundary case (see for example, Biggins and Kyprianou~\cite{BK05}). Every branching random walk satisfying certain mild integrability assumptions can be reduced to this case by some renormalization; see Jaffuel~\cite{Ja} for more details.

Denote $M_n=\min_{|x|=n}V(x)$, i.e.,  the minimal position at generation $n$. We introduce the conditional probability $\mathbf{P}^*(\cdot):=\mathbf{P}(\cdot\,|\mbox{non-extinction}).$ Under (\ref{stable0}), $M_n\rightarrow\infty, \mathbf{P}^*\mbox{--a.s.}$ (See for example, \cite{B77} and \cite{Ly}). The asymptotic behaviors of $M_n$ have been extensively studied in \cite{AR09}, \cite{A13}, \cite{BZ06}, \cite{HS09}, etc. In particular, under (\ref{stable0}) and certain exponential integrability conditions, Hu and Shi~\cite{HS09} obtained the following:
\begin{align}
\limsup_{n\to \infty}\,\frac{M_n}{\log n}=\frac{3}{2}, \quad \mathbf{P}^*\mbox{--a.s.}\nonumber\\
\liminf_{n\to \infty}\,\frac{M_n}{\log n}=\frac{1}{2}, \quad \mathbf{P}^*\mbox{--a.s.}\nonumber
\end{align}
It showed that there is a phenomena of fluctuation at the logarithmic scale. Aidekon~\cite{A13} proved the convergence in law of $M_n-\frac{3}{2}\log n$ when (\ref{stable0}) and the following two conditions hold:
\begin{align}
\mathbf{E}\Big(\sum_{|x|=1}V^2(x)e^{-V(x)}\Big)<\infty,\label{incon}
\end{align}
\begin{align}
\mathbf{E}\big(X(\log_{+}{X})^{2}+\widetilde{X}(\log_{+}{\widetilde{X}})\big)<\infty, \label{incon1}
\end{align}
where $X:=\sum_{|x|=1}e^{-V(x)}$, $\widetilde{X}:=\sum_{|x|=1}V(x)_+e^{-V(x)}$, and $V(x)_+:=\max\{V(x),0\}$.
Later, Aidekon and Shi~\cite{AS14} proved that under \eqref{stable0}--\eqref{incon1},
$$
\liminf_{n\rightarrow\infty}\,\big(M_n-\frac{1}{2}\log n\big)=-\infty,\quad \mathbf{P}^*\mbox{--a.s.}
$$
Based on this result, Hu~\cite{H15} established the second order limit under the same assumptions \eqref{stable0}--\eqref{incon1}:
$$
\liminf_{n\rightarrow\infty} \,\frac{M_n-\frac{1}{2}\log n}{\log\log n}=-1,\quad \mathbf{P}^*\mbox{--a.s.}
$$
When (\ref{stable0}), (\ref{incon1}) and a higher order integrability condition for $V(x)$ hold \Big(i.e. $\mathbf{E}\big(\sum_{|x|=1}$ $(V(x)_+)^3e^{-V(x)}\big)<\infty$\Big),  the upper limit was  obtained by Hu ~\cite{H13}:
$$
\limsup_{n\rightarrow\infty}\frac{M_n-\frac{3}{2}\log n}{\log\log\log n}=1.
$$

Throughout the following, $c, c', c_1,c_2,\cdots$ will denote some positive constants whose value may change from place to place. $f(x)\sim g(x)$ as $x\to \infty$ means that $\lim_{x\to \infty} \frac{f(x)}{g(x)}=1$; $f(x)=O(g(x))$ as $x\to \infty$ means that $\lim_{ x\to \infty} \frac{f(x)}{g(x)}=c$.

In this paper, we shall consider the random walk by assuming that
\begin{align}
&  \mathbf{E}\,\Big(\sum_{|x|=1}\mathbf{1}_{\{V(x)\leq -y\}} e^{-V(x)}\Big)=O({y^{-\alpha-\varepsilon})},\quad y\to \infty; \label{stable1}&\\
&  \mathbf{E}\,\Big(\sum_{|x|=1}\mathbf{1}_{\{V(x)\geq y\}} e^{-V(x)}\Big)\sim \frac{c}{y^{\alpha}},\quad y\to \infty; \label{stable2}&\\
& \mathbf{E}\big(X(\log_{+}{X})^{\alpha}+\widetilde{X}(\log_{+}{\widetilde{X}})^{\alpha-1}\big)<\infty,  \label{stable3}&
\end{align}
where $\alpha\in (1,2)$, $\varepsilon>0$, $c>0$. Under   (\ref{stable1}) and (\ref{stable2}), in Section 2, we shall see that there is one-dimensional random walk $\{S_n\}$ corresponding to $(V(x))$, where
$S_1$ belongs to the domain of attraction of a spectrally positive stable law. We call $(V(x))$ a {\it stable random walk}.
we  shall study the asymptotic behavior of $M_n$  for the stable random walk $(V(x))$ under the conditions \eqref{stable0}, \eqref{stable1}--\eqref{stable3}.  Our main results are the following Theorems~\ref{T:1.1}--\ref{T:1.4}.

\begin{theorem}\label{T:1.1}
Assume \eqref{stable0}, \eqref{stable1}--\eqref{stable3}. For any nondecreasing function $f$ satisfying \\
$\lim_{x\rightarrow\infty}f(x)=\infty$, we have
\begin{align}
{\mathbf{P}^*(M_n-\frac{1}{\alpha}\log n<-f(n), \quad\mbox{i.o.})=\left\{
\begin{aligned}
&0\\
&1\\
\end{aligned}
\right.\;\;\Leftrightarrow\;\; \int_0^\infty \frac{1}{\;t\,e^{f(t)}\;}d\,t\left\{
\begin{aligned}
&<\infty\\
&=\infty. \\
\end{aligned}
\right.}
\end{align}
\end{theorem}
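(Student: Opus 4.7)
The strategy is the Erd\H{o}s--Feller--Kolmogorov integral test adapted to the boundary branching random walk, following Aid\'ekon--Shi and Hu in the Brownian case but with the Gaussian inputs replaced by their $\alpha$-stable analogues. The heart of the argument is a two-sided bound
\[
    c_1\,e^{-r}\;\le\;\mathbf{P}^*\bigl(M_n\le \tfrac{1}{\alpha}\log n-r\bigr)\;\le\;c_2\,e^{-r},\qquad 1\le r\le C\log n,
\]
valid uniformly in $n$. Granted this, the theorem reduces to a Borel--Cantelli argument along the geometric subsequence $n_k=2^k$: summability of the upper bound $c_2 e^{-f(n_k)}$ over $k$ is, via Cauchy condensation and the monotonicity of $f$, equivalent to finiteness of the stated integral.

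\textbf{Central estimate.} Under the boundary conditions \eqref{stable0}, the many-to-one lemma produces a centered one-dimensional walk $(S_n)$, and under \eqref{stable1}--\eqref{stable2} its step distribution lies in the domain of attraction of a spectrally positive $\alpha$-stable law with norming sequence $n^{1/\alpha}$ (up to slowly varying factors). The positivity parameter is $\rho=1-1/\alpha$, so by Sparre Andersen--Vatutin--Wachtel fluctuation theory for stable walks, $\mathbf{P}(\min_{k\le n}S_k\ge 0)\asymp n^{-1/\alpha}$. The upper bound in the central estimate then follows from a first-moment computation after Lyons' spinal change of measure, combined with a Caravenna--Chaumont-type stable local limit theorem, paying a factor $n^{-1/\alpha}$ for the positivity constraint and $e^{-r}$ for the $r$-shift below $\tfrac{1}{\alpha}\log n$. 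The matching lower bound rests on a truncated second-moment argument along the spine; the moment hypothesis \eqref{stable3} in the form $\mathbf{E}[X(\log_+X)^\alpha+\widetilde{X}(\log_+\widetilde{X})^{\alpha-1}]<\infty$ is exactly what controls the stable $L^2$ computation.

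\textbf{Convergence and divergence halves.} The convergence half is immediate: summing $c_2 e^{-f(n_k)}$ and applying Borel--Cantelli along $n_k=2^k$ handles the subsequence, and interpolation for $n_k\le n<n_{k+1}$ is done by following a single descendant line from the generation-$n_k$ minimum and using monotonicity of $f$ to transfer the bound (at the cost of a harmless constant in $f$). For the divergence half, the events at distinct $n$ are too dependent for a direct second Borel--Cantelli, so I would decouple at generation $\tfrac12 n_k$: the sub-trees rooted at the particles of generation $\tfrac12 n_k$ are conditionally independent by the branching property, and the derivative/additive martingale machinery, valid here by \eqref{stable3}, shows that on survival a large number of them land in positions where the lower bound of the central estimate applies with $r\approx f(n_k)$. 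A conditional Borel--Cantelli then forces $M_{n_k}\le\tfrac{1}{\alpha}\log n_k-f(n_k)$ infinitely often.

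\textbf{Main obstacle.} The technical core is the lower bound in the central estimate. In the Brownian case this rests on Brownian bridge/excursion asymptotics plus a spine $L^2$ bound; here those must be replaced by uniform stable local-limit estimates for walks conditioned to stay positive, together with careful bookkeeping of the slowly varying factors coming from \eqref{stable1}--\eqref{stable2}. Verifying that \eqref{stable3} is precisely sharp enough to make the stable second-moment calculation close---and that the negative-tail hypothesis \eqref{stable1} with its $\varepsilon$-slack absorbs the unavoidable error terms in the stable local limit theorem---is where almost all of the work will go.
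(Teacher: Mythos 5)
Your central two-sided estimate $c_1e^{-r}\le\mathbf{P}^*(M_n\le\tfrac1\alpha\log n-r)\le c_2e^{-r}$ is plausible and is close in spirit to what the paper actually proves (its lower bound is essentially Lemma 3.2, taken from \cite{sta}), but both of your reduction steps have genuine gaps. For the convergence half, your interpolation is in the wrong direction: knowing that $M_{n_k}$ is \emph{not} too low at the dyadic times says nothing about intermediate $n\in(n_k,n_{k+1})$, because a particle can dip below $\tfrac1\alpha\log n-f(n)$ at such an $n$ and leave no low descendant at time $n_{k+1}$ (its line can even die out); ``following a single descendant line from the generation-$n_k$ minimum'' only transfers \emph{upper} bounds on $M_n$ forward, which is useless here. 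Since the individual bounds $c_2e^{-f(n)}$ are in general not summable over all $n$ (take $f(n)=2\log\log n$), you genuinely need a maximal inequality over each dyadic block. This is exactly why the paper routes the convergence part through the additive martingale: $e^{-M_n}\le W_n$, so it suffices to bound $\limsup_n n^{1/\alpha}W_n/e^{f(n)}$, and the block maximum $\max_{n\le k\le m}k^{1/\alpha}W_k^{(\beta)}$ is controlled by Doob's inequality applied to the auxiliary martingale $W_k^{(\beta,n)}$ together with $\mathbf{P}(\inf_{x\in\mathbf{T}}V(x)<-y)\le e^{-y}$ (Lemma 3.1). No device of this kind appears in your plan.

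For the divergence half, the decoupling at generation $\tfrac12 n_k$ fails quantitatively. By Theorem 1.3 (already its easy half), a.s.\ for large $k$ every particle at generation $\tfrac12 n_k$ sits at height at least $\tfrac{1}{2\alpha}\log n_k$, so \emph{no} subtree root is in a position where your lower bound applies with $r\approx f(n_k)$: each subtree must descend an extra $\approx\tfrac1\alpha\log n_k$, and the conditional probability of $\{M_{n_k}\le\tfrac1\alpha\log n_k-f(n_k)\}$ given $\mathcal{F}_{n_k/2}$ is at most of order $e^{-f(n_k)}W_{n_k/2}\approx e^{-f(n_k)}n_k^{-1/\alpha}$ (Seneta--Heyde scaling of \cite{sta}), whose sum over $k$ converges a.s.; L\'evy's conditional Borel--Cantelli therefore yields nothing. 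The unconditional probability $\asymp e^{-f(n_k)}$ is carried by rare realizations in which one line stays low from the root onward, which makes the events at different scales strongly positively correlated through their common early generations --- precisely the dependence your decoupling cannot see. The paper's argument is built to handle this: it introduces barrier events $E(n,\lambda)=\bigcup_k(A_k^{(n,\lambda)}\cap B_k^{(n,\lambda)})$, proves $\mathbf{P}(E(n,\lambda))\asymp e^{-\lambda}$ (Lemma 3.2) and the cross-scale bound $\mathbf{P}(E_i\cap E_j)\le ce^{-\lambda_i-\lambda_j}+ce^{-\lambda_j}\log n_i/n_i^{1/\alpha}$ via the spinal decomposition under $\mathbf{Q}$ (Lemma 3.3), applies the Kochen--Stone version of Borel--Cantelli to get $\mathbf{P}(E_i\ \mbox{i.o.})\ge c_8>0$, and only then upgrades to a $\mathbf{P}^*$-a.s.\ statement through the branching property (the $F_\infty=\cap_xF_x$ argument with the bound $1-(1-c_8)^{Z_k}$). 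Some second-moment/Kochen--Stone mechanism plus a zero--one boost of this type is unavoidable, and it is missing from your proposal.
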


The behavior of the minimal position $M_n$ is closely related to the so-called additive martingale $(W_n)_{n\geq0}$:
\begin{align}
W_n:=\sum_{|u|=n}e^{-V(u)}, n\geq0. \nonumber
\end{align}
By \cite{bi2} and \cite{Ly}, $W_n\rightarrow0$ almost surely as $n\rightarrow\infty$. A similar integral test for the upper limits of $W_n$ can be described as follows:
\begin{theorem} \label{T:1.2}
Assume  \eqref{stable0}, \eqref{stable1}--\eqref{stable3}. For any nondecreasing function $f$ satisfying\\
 $
\lim_{x\rightarrow\infty}f(x)=\infty$, we have $\mathbf{P}^*\mbox{--a.s.}$
\begin{align}
\limsup_{n\rightarrow\infty} \frac{n^{\frac{1}{\alpha}}W_n}{f(n)}=\left\{
\begin{aligned}
&0\\
&\infty\\
\end{aligned}
\right.\;\;\Leftrightarrow\;\; \int_0^\infty \frac{1}{\;t\,f(t)\;}d\,t\left\{
\begin{aligned}
&<\infty\\
&=\infty. \\
\end{aligned}
\right.
\end{align}
\end{theorem}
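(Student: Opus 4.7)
The plan is to treat the two directions of the dichotomy separately: the divergence case reduces directly to Theorem~\ref{T:1.1}, while the convergence case requires a Borel--Cantelli argument driven by a tail estimate on $W_n$.

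For the divergence direction, start from the trivial bound $W_n\geq e^{-M_n}$, which gives $n^{1/\alpha}W_n\geq\exp\bigl(-(M_n-\tfrac{1}{\alpha}\log n)\bigr)$. Given any constant $K>0$, set $g_K(n):=\log f(n)+K$; since $f$ is nondecreasing with limit $\infty$, so is $g_K$, and
\[
\int^\infty \frac{dt}{t\,e^{g_K(t)}}\;=\;e^{-K}\int^\infty \frac{dt}{t\,f(t)}\;=\;\infty.
\]
Applying Theorem~\ref{T:1.1} to $g_K$ in place of $f$ yields $M_n-\tfrac{1}{\alpha}\log n<-g_K(n)$ infinitely often on non-extinction, hence $n^{1/\alpha}W_n>e^K f(n)$ i.o.\ $\mathbf{P}^*$--a.s. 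Sending $K\to\infty$ forces $\limsup_n n^{1/\alpha}W_n/f(n)=\infty$.

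For the convergence direction, I want $n^{1/\alpha}W_n/f(n)\to 0$ $\mathbf{P}^*$--a.s. The strategy is Borel--Cantelli along the geometric subsequence $n_k:=2^k$, driven by a tail bound of the form $\mathbf{P}(n^{1/\alpha}W_n>a)\leq C/a$, uniform in $n$ large. I would prove this via the size-biased change of measure $d\mathbf{Q}/d\mathbf{P}|_{\mathcal{F}_n}=W_n$: Markov yields $\mathbf{P}(W_n>t)\leq t^{-1}\mathbf{Q}(W_n>t)$, and the $\mathbf{Q}$-probability is controlled by the spine decomposition, using that under $\mathbf{Q}$ the spine $V(w_n)$ has the law of the centered walk $S_n$, which lies in the domain of attraction of a spectrally positive $\alpha$-stable law by \eqref{stable1}--\eqref{stable2}; the moment input \eqref{stable3} feeds into the one-step decomposition at each vertex of the spine. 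Granted the tail bound,
\[
\sum_k\mathbf{P}\bigl(n_k^{1/\alpha}W_{n_k}>\varepsilon f(n_k)\bigr)\;\leq\;\frac{C}{\varepsilon}\sum_k\frac{1}{f(n_k)}\;\leq\;\frac{C'}{\varepsilon}\int^\infty\frac{dt}{t\,f(t)}\;<\;\infty,
\]
where the last comparison uses the monotonicity of $f$ together with $\int_{n_k}^{n_{k+1}}dt/(tf(t))\geq(\log 2)/f(n_{k+1})$. The first Borel--Cantelli lemma then produces $\limsup_k n_k^{1/\alpha}W_{n_k}/f(n_k)\leq\varepsilon$, and letting $\varepsilon\downarrow 0$ along a countable set completes the subsequence case.

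To pass from $(n_k)$ to the full sequence I would bound $\max_{n_k\leq m\leq n_{k+1}}m^{1/\alpha}W_m$ via the same spine machinery localized to one block, absorbing the harmless factor $n_{k+1}^{1/\alpha}/n_k^{1/\alpha}\leq 2^{1/\alpha}$ into the constants. The main obstacle is the uniform tail estimate $\mathbf{P}(n^{1/\alpha}W_n>a)\leq C/a$: plain Markov gives only the useless bound $n^{1/\alpha}/a$, so the improvement by the factor $n^{1/\alpha}$ must be extracted from the heavy-tailed stable structure in \eqref{stable2} combined with the spine transformation. If only a weaker form (say $C\log a/a$) were available, the Borel--Cantelli step would need either a sparser subsequence or a truncation of $W_n$ at an appropriate scale before summation.
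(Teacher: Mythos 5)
Your divergence half is correct and is exactly the paper's route: the paper likewise deduces the divergence part of Theorem \ref{T:1.2} from the divergence part of Theorem \ref{T:1.1} (which is proved independently, via Lemmas \ref{L:3.2}--\ref{L:3.3} and Kochen--Stone, so there is no circularity) through $W_n\ge e^{-M_n}$; your shift $g_K=\log f+K$ with $K\to\infty$ is the standard upgrade from $\limsup\ge e^K$ to $\limsup=\infty$.

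The convergence half has a genuine gap: it hinges on the uniform tail bound $\mathbf{P}(n^{1/\alpha}W_n>a)\le C/a$, which you do not prove and which the route you sketch cannot deliver. Markov under the size-biased measure gives $\mathbf{P}(W_n>t)\le t^{-1}\mathbf{Q}(W_n>t)$, but with $t=an^{-1/\alpha}$ the factor $\mathbf{Q}(W_n>an^{-1/\alpha})$ does not decay in $n$: under $\mathbf{Q}$ one has $W_n\ge e^{-V(\omega_n)}$ and $V(\omega_n)$ has the law of the centered walk $S_n$, so $\mathbf{Q}(W_n>an^{-1/\alpha})\ge \mathbf{P}\big(S_n<\frac{1}{\alpha}\log n-\log a\big)$, which converges to a positive constant (the limiting stable law charges the negative half-line); thus this step recovers nothing beyond plain Markov. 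The natural repair is truncation: for $W_n^{(\beta)}$ (paths staying above $-\beta$) one has $\mathbf{E}W_n^{(\beta)}\le c(1+\beta)n^{-1/\alpha}$ by \eqref{manytoone} and \eqref{S:1}, and $\mathbf{P}(\inf_{x\in\mathbf{T}}V(x)<-\beta)\le e^{-\beta}$; optimizing $\beta\approx\log a$ yields precisely your fallback $C(1+\log a)/a$, which is not sufficient for the sharp integral test: for $f(t)=\log t\,(\log\log t)^2$ one has $\int^\infty dt/(tf(t))<\infty$ while $\sum_k \log f(2^k)/f(2^k)=\infty$, so the Borel--Cantelli sum diverges, and neither a sparser subsequence nor an $a$-dependent truncation rescues the full-sequence statement. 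The paper closes this gap differently (Lemma \ref{L:3.1}): it keeps the truncation level $\beta$ fixed, independent of $n$, $\lambda$ and $f$, bounds $\mathbf{P}(\max_{n\le k\le m}k^{1/\alpha}W_k^{(\beta)}>\lambda)\le c\big(\log n\, n^{-1/\alpha}+\lambda^{-1}(m/n)^{1/\alpha}\big)$ via Doob's maximal inequality applied to the auxiliary martingale $W_k^{(\beta,n)}$ with the truncation frozen at generation $n$ (this is also what gives the maximal control over each dyadic block, which you defer to unspecified spine machinery), runs Borel--Cantelli along $n_j=2^j$ for each fixed $\beta$, and only at the very end lets $\beta\to\infty$, using that $\inf_{x\in\mathbf{T}}V(x)>-\infty$ almost surely; this is how the $\log a$ penalty is avoided. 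Without this device, or an honest proof of the clean uniform maximal bound $C/a$ (which is not in the paper and is not obviously true), your convergence direction does not close.
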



\begin{theorem}\label{T:1.3}
Assume \eqref{stable0}, \eqref{stable1}--\eqref{stable3}. We have \begin{align}
\liminf_{n\rightarrow\infty}\frac{M_n-\frac{1}{\alpha}\log n}{\log\log n}=-1, \; \mathbf{P}^*\mbox{--a.s.}
\end{align}
\end{theorem}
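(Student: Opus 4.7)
The plan is to derive Theorem~\ref{T:1.3} as a direct corollary of Theorem~\ref{T:1.1} by choosing the test function $f$ to be a suitable multiple of $\log\log n$, in the spirit of the classical Khinchin LIL deduction from a Kolmogorov integral test. Since Theorem~\ref{T:1.1} already encapsulates all of the hard probabilistic work, the argument here is essentially bookkeeping: pick a countable family of admissible test functions whose integrals straddle the convergent/divergent dichotomy, and pass to a limit.

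For the upper bound $\liminf_{n\to\infty}\frac{M_n-\frac{1}{\alpha}\log n}{\log\log n}\ge -1$, I would fix a rational $\varepsilon>0$ and set
\[
f_{\varepsilon}(t):=(1+\varepsilon)\log\log(t\vee e^{e}),
\]
which is nondecreasing and tends to infinity, hence admissible in Theorem~\ref{T:1.1}. For this choice, $e^{f_{\varepsilon}(t)}=(\log t)^{1+\varepsilon}$ for $t$ large, and the Bertrand integral
\[
\int^{\infty}\frac{dt}{t\,(\log t)^{1+\varepsilon}}
\]
converges. By the convergence half of Theorem~\ref{T:1.1}, $\mathbf{P}^{*}\big(M_n-\tfrac{1}{\alpha}\log n<-(1+\varepsilon)\log\log n,\text{ i.o.}\big)=0$, so $\mathbf{P}^{*}$-a.s.\ $\liminf_{n}\frac{M_n-\frac{1}{\alpha}\log n}{\log\log n}\ge -(1+\varepsilon)$. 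Intersecting the almost-sure events over rational $\varepsilon\downarrow 0$ yields the desired inequality.

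For the matching lower bound $\liminf_{n\to\infty}\frac{M_n-\frac{1}{\alpha}\log n}{\log\log n}\le -1$, fix a rational $\varepsilon\in(0,1)$ and instead take
\[
g_{\varepsilon}(t):=(1-\varepsilon)\log\log(t\vee e^{e}),
\]
again admissible. Now $e^{g_{\varepsilon}(t)}=(\log t)^{1-\varepsilon}$ for large $t$ and the integral $\int^{\infty}\frac{dt}{t(\log t)^{1-\varepsilon}}$ diverges. The divergence half of Theorem~\ref{T:1.1} then forces $\mathbf{P}^{*}\big(M_n-\tfrac{1}{\alpha}\log n<-(1-\varepsilon)\log\log n,\text{ i.o.}\big)=1$, so $\liminf_{n}\frac{M_n-\frac{1}{\alpha}\log n}{\log\log n}\le -(1-\varepsilon)$ almost surely on non-extinction. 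Taking the intersection over rational $\varepsilon\downarrow 0$ gives $\liminf\le -1$, and combining with the previous paragraph concludes the proof.

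There is no real obstacle here beyond Theorem~\ref{T:1.1} itself, so the only technical point worth flagging is the mild verification that the functions $f_{\varepsilon}, g_{\varepsilon}$ satisfy the nondecreasing and $f(x)\to\infty$ hypotheses of Theorem~\ref{T:1.1}; this is ensured by the $t\vee e^{e}$ truncation, which does not affect the tail behavior of the integral. The heart of the work is packaged entirely inside Theorem~\ref{T:1.1}, and its proof (which I expect will rely on a quantitative small-deviation estimate for $M_n-\frac{1}{\alpha}\log n$ combined with a truncation/independence decomposition along the tree to get the two-sided Borel--Cantelli control) is where all the stable-law analysis resides.
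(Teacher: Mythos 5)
Your proposal is correct and is essentially the paper's own argument: the paper likewise obtains Theorem~\ref{T:1.3} by plugging test functions of the form $(1+\varepsilon)\log\log n$ (convergence side) and $\log\log n$ (divergence side) into the integral test, exactly as you do with $f_\varepsilon$ and $g_\varepsilon$ and then letting $\varepsilon\downarrow 0$ along rationals. The only cosmetic difference is that the paper uses $\log\log n$ itself rather than $(1-\varepsilon)\log\log n$ on the divergence side, which changes nothing since $\int^\infty \frac{dt}{t\log t}=\infty$.
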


\begin{remark}
 For the random walk $(V(x))$ satisfying (\ref{stable0})--(\ref{incon1}), where the one-dimensional random walk associated with $(V(x))$ has finite variance, Hu \cite[Theorem 1.1, 1.2, Proposition 1.3]{H15}  established the corresponding theorems for $M_n$ and $W_n$. Theorem \ref{T:1.1}--\ref{T:1.3} are extensions of them for the stable random walk under \eqref{stable0}, \eqref{stable1}--\eqref{stable3}. Now the one-dimensional random walk $\{S_n\}$  associated with $(V(x))$ has no finite variance (see Section 2 for details).
\end{remark}

\begin{theorem}\label{T:1.4}
Assume \eqref{stable0}, \eqref{stable1}--\eqref{stable3}. We have
\begin{align}
\limsup_{n\rightarrow\infty}\frac{M_n-(1+\frac{1}{\alpha})\log n}{\log\log\log n}\geq 1, \; \mathbf{P}^*\mbox{--a.s.} \nonumber
\end{align}
\end{theorem}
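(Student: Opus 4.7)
The goal is to show that, for each $\varepsilon\in(0,1)$, under $\mathbf{P}^*$ the event $A_n:=\{M_n>(1+\tfrac{1}{\alpha})\log n+(1-\varepsilon)\log\log\log n\}$ holds for infinitely many $n$, almost surely. The plan is to combine a one-time lower bound on $\mathbf{P}(A_n)$ with a conditional Borel--Cantelli argument along a rapidly growing subsequence.

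\textbf{Step 1: one-time tail lower bound.} The central estimate is
\[
\mathbf{P}\bigl(M_n>(1+\tfrac{1}{\alpha})\log n+z\bigr)\;\ge\;c\,(1+z)^{-q}\,e^{-z}\qquad (n\text{ large},\ 0\le z\le\log n),
\]
for some constants $c,q>0$. I would prove this via the additive-martingale change of measure: define $\mathbb{Q}$ on $\mathcal F_n$ by $d\mathbb{Q}/d\mathbf{P}=W_n$. Under $\mathbb{Q}$ one has the usual spine decomposition, and the spine's position $S_j=V(w_j)$ is an i.i.d.\ random walk with mean-zero step (boundary condition \eqref{stable0}) and tails controlled by \eqref{stable1}--\eqref{stable2}; by the analysis of Section~2 this walk is in the domain of attraction of a spectrally positive $\alpha$-stable law. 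Writing $a=(1+\tfrac{1}{\alpha})\log n+z$ and using
\[
\mathbf{P}(M_n>a)=\mathbb{E}_{\mathbb{Q}}\bigl[\mathbf 1_{\{M_n>a\}}/W_n\bigr],
\]
one restricts to the favourable event $G_n$ on which (i) $S_j\ge 0$ for $1\le j\le n$ and $S_n\in[a,a+1]$, and (ii) every off-spine subtree has its generation-$n$ minimum above $a$. Doney-type ballot asymptotics together with the stable local limit theorem give the spine-event probability of order $z\cdot n^{-1/\alpha}$; (ii) is controlled by a many-to-one first moment using \eqref{stable3}; and on $G_n$ the Jacobian $1/W_n$ is at least of order $e^a=n^{1+1/\alpha}e^z$ up to a sub-polynomial factor, which combines with the two contributions to yield the stated bound.

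\textbf{Step 2: subsequence and conditional Borel--Cantelli.} Take $n_k=\lfloor e^{e^k}\rfloor$, so that $\log\log n_k=k$. Then Step 1 with $z=(1-\varepsilon)\log k$ gives $\mathbf{P}(A_{n_k})\ge c\,k^{-(1-\varepsilon)}(\log k)^{-q}$, hence $\sum_k\mathbf{P}(A_{n_k})=\infty$. Because the $A_{n_k}$ are not independent, I would condition on $\mathcal F_{n_{k-1}}$: on non-extinction there are many independent subtrees alive at generation $n_{k-1}$, and applying Step 1 to any one of them whose root lies near the typical level provides a conditional lower bound on $\mathbf{P}(A_{n_k}\mid\mathcal F_{n_{k-1}})$ of the same order with high probability. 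L\'evy's conditional Borel--Cantelli lemma then gives $A_{n_k}$ infinitely often, $\mathbf{P}^*$-a.s.

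\textbf{Main obstacle.} The hardest step is the tail bound of Step 1. In the Gaussian case of Hu~\cite{H13}, the relevant ingredients (local CLT and Kolmogorov-type ballot estimates for walks conditioned to stay positive) are classical. Here the spine walk is heavy-tailed with right tail $\sim cy^{-\alpha}$, so I would need to invoke instead the stable local limit theorem and Doney's asymptotics for $\alpha$-stable walks conditioned to stay positive; the factor $n^{-1/\alpha}$ produced by these estimates is precisely what replaces the Gaussian centering $\tfrac{3}{2}\log n$ by $(1+\tfrac{1}{\alpha})\log n$. A further subtlety, absent in the Gaussian case, is that the spine may make a rare large positive jump which threatens the off-spine subtree control; condition \eqref{stable3} is precisely what is needed to keep the relevant many-to-one moment convergent in this regime.
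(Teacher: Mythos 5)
The decisive gap is in your Step 2. The event $A_{n_k}=\{M_{n_k}>(1+\frac{1}{\alpha})\log n_k+z_k\}$ requires \emph{every} particle of generation $n_k$ to be high, so conditionally on $\mathcal F_{n_{k-1}}$ it is the \emph{intersection} over all particles $u$ alive at time $n_{k-1}$ of the events that the subtree rooted at $u$ keeps its minimum above the barrier; it is not produced by one favourable subtree, and hence "applying Step 1 to any one of them" gives no lower bound on $\mathbf P(A_{n_k}\mid\mathcal F_{n_{k-1}})$. The correct conditional probability is a product of the form $\prod_{|u|=n_{k-1}}\bigl(1-\mathbf P(M_{n'}<(1+\frac{1}{\alpha})\log n'-(V(u)-z_k)+O(1))\bigr)$ with $n'=n_k-n_{k-1}$, and to bound it from below one needs exactly the ingredients your proposal never mentions: (i) an upper bound on the \emph{lower}-deviation probability, $\mathbf P(M_n<(1+\frac{1}{\alpha})\log n-\lambda)\le c(1+\lambda)e^{-\lambda}$, which is the paper's Lemma \ref{L:4.1} (proved via \eqref{S:3}--\eqref{S:4}) and is the real work of Section 4; (ii) Theorem \ref{T:1.3} to guarantee that all $V(u)$, $|u|=n_{k-1}$, are much larger than $z_k$; and (iii) the a.s. convergence of the derivative martingale $D_n=\sum_{|u|=n}V(u)e^{-V(u)}\to D_\infty$, which turns the product into something like $\exp(-c\,e^{z_k}D_{n_{k-1}})$.

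Two further problems. First, your subsequence is too thin: with $n_k=\lfloor e^{e^k}\rfloor$ and $z_k=(1-\varepsilon)\log k$ the correct conditional probability is of order $\exp(-c\,D_\infty e^{z_k})=\exp(-c\,D_\infty k^{1-\varepsilon})$, which is summable in $k$, so even the corrected conditional Borel--Cantelli argument fails along this subsequence; the paper instead takes $n_j=2^j$ and $\lambda_j=a\log\log\log n_j$ with $a<1$, so that $e^{\lambda_j}D_\infty\approx(\log j)^aD_\infty=o(\log j)$, the conditional sum diverges, L\'evy's lemma applies, and one lets $a\uparrow1$. Second, the annealed bound of your Step 1, $\mathbf P^*(M_n>(1+\frac{1}{\alpha})\log n+z)\ge c(1+z)^{-q}e^{-z}$, is both unnecessary and unsafe: by the quenched picture this probability is essentially $\mathbf E^*[\exp(-c\,e^{z}D_\infty)]$, i.e. it is governed by the left tail of $D_\infty$ near $0$, which depends on the offspring law and can decay like $e^{-\beta z}$ with $\beta>1$; the a.s. limsup constant $1$ comes from the quenched (conditional) probabilities with $D_\infty$ fixed, not from an $e^{-z}$ annealed tail. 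So the proposal would have to be restructured along the paper's lines: Lemma \ref{L:4.1}, Theorem \ref{T:1.3}, the derivative-martingale convergence, and the conditional Borel--Cantelli at the dyadic scale.
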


\begin{remark}

The upper limit for $M_n$ is established in Hu~\cite{H13} under  (\ref{stable0})--(\ref{incon1}) and finite third order moment $$\mathbf{E}\Big(\sum_{|x|=1}(V(x)_+)^3e^{-V(x)}\Big)<\infty. $$  While in this paper, under \eqref{stable0}, \eqref{stable1}--\eqref{stable3}, for $k\ge \alpha$, $(V(x))$ no longer satisfies the integrability condition  $$\mathbf{E}\Big(\sum_{|x|=1}(V(x)_+)^ke^{-V(x)}\Big)<\infty. $$  In this case, we have not got the condition for the upper limit of $\frac{M_n-(1+\frac{1}{\alpha})\log n}{\log\log\log n}$. In Theorem~\ref{T:1.4},  only a lower bound  is obtained.
\end{remark}

\section{Stable random walk}

In this section,  we first introduce an one-dimensional random walk associated with the branching random walk.

For $a\in\mathbb{R}$, we denote by $\mathbf{P}_a$ the probability distribution associated to the branching random walk $(V(x))$ starting from $a$, and $\mathbf{E}_a$ the corresponding expectation. For any vertex $x$ on the tree $\bf T$, we denote the shortest path from the root $\varnothing$ to $x$ by $\langle\varnothing,x\rangle\!:=\!\{x_0,x_1,x_2,\ldots,x_{|x|}\}$. Here $x_i$ is the ancestor of $x$ at the $i$-th generation. For any $\mu, \nu\in \textbf{T} $, we use the partial order $\mu<\nu$ if $\mu$ is an ancestor of $\nu$.  Under \eqref{stable0}, there exists a  sequence of independently and identically distributed real-valued random variables $S_1,S_2-S_1,S_3-S_2,\ldots,$ such that for any $n\geq1, a\in\mathbb{R}$ and any measurable function $g:\mathbb{R}^n\rightarrow[0,\infty),$
\begin{align}\label{manytoone}
\mathbf{E}_a\Big(\sum_{|x|=n}g\big(V(x_1),\ldots,V(x_n)\big)\Big)=\mathbf{E}_a\Big(e^{S_n-a}g(S_1,\ldots,S_n)\Big),
\end{align}
where, under $\mathbf{P}_a$, we have $S_0=a$ almost surely.  (\ref{manytoone}) is called the {\it  many-to-one formula}.  We will write $\mathbf{P}$ and $\mathbf{E}$ instead of $\mathbf{P}_0$ and $\mathbf{E}_0$. Since $\mathbf{E}\big(\sum_{|x|=1}V(x)e^{-V(x)}\big)=0$, we have $\mathbf{E}(S_1)=0$. By  (\ref{stable1}) and (\ref{stable2}), it is not difficult to see that $\mathbf{E}S_1^k=\infty$ for $k\ge \alpha$.
Under conditions \eqref{stable1} and \eqref{stable2},  $S_1$ belongs to the domain of attraction of a spectrally positive stable law with characteristic function
\begin{align}
  G(t) :=\exp\big\{-c_0|t|^\alpha\big(1-i\frac{t}{|t|}\tan{\frac{\pi\alpha}{2}}\big)\big\},\quad c_0>0. \nonumber
\end{align}

The following are some estimates on $(S_n)$, which are key in the proofs of the main theorems.
\begin{lemma}
Let $0<\lambda<1$. There exist positive constants $c_1,c_2,\cdots, c_5$ such that for any $a\ge 0, b\ge -a, 0\le u\le v$ and $n\ge1$,
\begin{align}
&\mathbf{P}(\underline{S}_n\geq -a) \leq c_1 \frac{(1+a)}{\,n^{\frac{1}{\alpha}}},\label{S:1}\\
&\mathbf{P}(\underline{-S}_n\geq -a) \leq c_2 \frac{(1+a)^{\alpha-1}}{\,n^{1-\frac{1}{\alpha}}},\ \label{S:2}\\
&\mathbf{P}(S_n\leq b,\,\underline{S}_n\geq-a)\leq c_3\frac{\;(1+a)(1+a+b)^\alpha}{n^{1+\frac{1}{\alpha}}},\label{S:3}\\
& \mathbf{P}\big(\underline{S}_{\;\llcorner\lambda n\lrcorner}\geq-a,\min_{i\in[\lambda n,n]\cap \mathbb{Z}}S_i\geq b,S_n\in[b+u,b+v]\,\big)\leq  c_4 \frac{(1+v)^{\alpha\!-\!1}(1+v-u)(1+a)}{n^{1+\frac{1}{\alpha}}},\label{S:4}\\
& \mathbf{P}(\underline{S}_n\geq-a, \min_{\lambda n\leq i<n}S_i>b, S_n\leq b)\leq c_5(1+a)n^{-1-\frac{1}{\alpha}}, \label{S:5}
\end{align}
where $\underline{S}_n:=\min_{0\le i\leq n}S_i$ and $\underline{-S}_n:=\min_{0\le i\leq n}(-S_i)$.
\end{lemma}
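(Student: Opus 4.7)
The plan is to regard $(S_n)$ as a centered, aperiodic random walk in the domain of attraction of a spectrally positive $\alpha$-stable law with positivity parameter $\rho=1-1/\alpha$. All five estimates will then be reduced to three standard inputs from the Vatutin--Wachtel and Doney--Caravenna theory: (I) the stay-positive asymptotic $\mathbf{P}_{a}(\underline S_n\ge 0)\asymp V(a)\,n^{-1/\alpha}$ with strict descending-ladder renewal function $V(a)\asymp 1+a$; (II) its dual applied to $-S$, namely $\mathbf{P}_{a}(\min_{i\le n}(-S_i)\ge 0)\asymp V^{*}(a)\,n^{-(1-1/\alpha)}$ with $V^{*}(a)\asymp (1+a)^{\alpha-1}$ (the ascending-ladder renewal function of $S$); and (III) the uniform local limit theorem for the stay-positive bridge, $\mathbf{P}_{a}(\underline S_n\ge 0,\,S_n\in\mathrm{d}y)\le c\,V(a)V^{*}(y)\,n^{-(1+1/\alpha)}$ for $y\ge 0$.

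Estimates \eqref{S:1} and \eqref{S:2} follow at once from (I) and (II) after shifting the starting point to $a$. For \eqref{S:3}, I would shift so that $\mathbf{P}(S_n\le b,\underline S_n\ge -a)=\mathbf{P}_a(\underline S_n\ge 0,\ S_n\le a+b)$, apply (III) and sum:
\[
\mathbf{P}_a(\underline S_n\ge 0,\ S_n\le a+b)\le c\,V(a)\,n^{-(1+1/\alpha)}\sum_{0\le y\le a+b}V^{*}(y)\le c\,(1+a)(1+a+b)^{\alpha}\,n^{-(1+1/\alpha)},
\]
since $\sum_{0\le y\le z}V^{*}(y)\asymp(1+z)^{\alpha}$; here $b\ge -a$ is used to guarantee $a+b\ge 0$.

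For \eqref{S:4} and \eqref{S:5}, I would decompose via the Markov property at $\tau:=\lfloor\lambda n\rfloor$. Writing $m:=n-\tau$ and shifting the second piece by $b$ (set $S'_i=S_{\tau+i}-b$),
\[
\mathbf{P}\bigl(\underline S_\tau\ge -a,\min_{\tau\le i\le n}S_i\ge b,S_n\in[b+u,b+v]\bigr)\le\int\mathbf{P}(\underline S_\tau\ge -a,S_\tau\in\mathrm{d}y)\,\mathbf{P}_{y-b}(\underline S_m\ge 0,S_m\in[u,v]).
\]
By (III) the inner factor is at most $c\,V(y-b)(1+v)^{\alpha-1}(1+v-u)\,n^{-(1+1/\alpha)}$, since $V^{*}(z)\le c(1+v)^{\alpha-1}$ on $[u,v]$ and the window has length $\le 1+v-u$. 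Because $b\ge -a$, one has $V(y-b)\le c(1+y+a)$, and the residual integral equals $\mathbf{E}_a[(1+S_\tau)\mathbf{1}_{\underline S_\tau\ge 0}]$, which is $\asymp V(a)\asymp 1+a$ by one more application of (III) and the elementary truncation $\int_0^{n^{1/\alpha}} y\,V^{*}(y)\,dy\asymp n^{(\alpha+1)/\alpha}$; this produces \eqref{S:4}. For \eqref{S:5}, the same split produces the factor $\mathbf{P}_{y-b}(\min_{0\le i<m}S_i>0,\,S_m\le 0)$, the probability that the shifted walk enters $(-\infty,0]$ for the first time exactly at step $m$. This is at most $c\,V(y-b)\,n^{-(1+1/\alpha)}$, obtained either by taking the discrete derivative $\mathbf{P}_x(\tau_0^{-}>m-1)-\mathbf{P}_x(\tau_0^{-}>m)$ of (I), or by a direct Wiener--Hopf computation; integrating as above yields $c(1+a)\,n^{-(1+1/\alpha)}$.

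The main obstacle is securing the local limit estimate (III) \emph{uniformly} in $a$ and $y$ with the explicit renewal-function prefactors, which is precisely what the Vatutin--Wachtel/Caravenna--Doney theory delivers, provided that $\pm S$ lie in the domain of normal attraction of spectrally one-sided $\alpha$-stable laws. Assumption \eqref{stable2} gives this on the positive side directly, while the stronger polynomial tail in \eqref{stable1} (with exponent $\alpha+\varepsilon$) ensures the negative tail of $S_1$ is lighter than that of the limit law, so that the spectrally positive structure is preserved. A secondary bookkeeping issue is maintaining constants uniform in $a,b,u,v$ through the Markov splittings and the sum $\sum V^{*}(y)$, which is routine once (III) is in hand.
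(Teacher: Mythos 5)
Your fluctuation-theoretic route is genuinely different from the paper's: the paper does not reprove \eqref{S:1}--\eqref{S:4} at all (it cites Lemmas 2.1--2.4 of the companion paper \cite{sta}) and only proves \eqref{S:5} in the text, by an elementary last-step decomposition. Granting the uniform conditioned local limit bound you call (III) (which is indeed what the Vatutin--Wachtel/Caravenna--Doney theory supplies for walks attracted to a spectrally positive stable law), your derivations of \eqref{S:1}--\eqref{S:3} are fine. However, two steps in your treatment of \eqref{S:4} and \eqref{S:5} have real gaps.

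First, in \eqref{S:4} you bound the residual term $\mathbf{E}_a\big[(1+S_\tau)\mathbf{1}_{\{\underline S_\tau\ge 0\}}\big]$ by (III) together with the truncation $\int_0^{n^{1/\alpha}}y\,V^{*}(y)\,dy\asymp n^{1+1/\alpha}$; but (III) alone cannot control the contribution of $\{S_\tau>n^{1/\alpha}\}$, since summing $(1+y)V^{*}(y)\,n^{-1-1/\alpha}$ over $y>n^{1/\alpha}$ diverges (the summand grows like $y^{\alpha}$). You need a separate argument for the upper tail; the clean fix is harmonicity of the descending-ladder renewal function, $\mathbf{E}_a\big[V(S_k)\mathbf{1}_{\{\underline S_k\ge 0\}}\big]=V(a)$ with $V(x)\asymp 1+x$, which gives the bound $c(1+a)$ uniformly. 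Second, and more seriously, in \eqref{S:5} the claimed estimate $\mathbf{P}_x\big(\min_{0\le i<m}S_i>0,\,S_m\le 0\big)\le c\,V(x)\,m^{-1-1/\alpha}$ does not follow by ``taking the discrete derivative'' of (I): (I) is only an upper bound, and even an exact asymptotic $\mathbf{P}_x(\tau^->m)\sim C V(x)m^{-1/\alpha}$ would not bound the increments $\mathbf{P}_x(\tau^-=m)$ at the smaller order $m^{-1-1/\alpha}$ without additional local regularity, uniformly in $x$. A correct argument either invokes a genuine local first-passage theorem with uniformity in the starting point, or proceeds as the paper does: condition on the last increment, note that on the event $S_{n-1}\in(b+j,b+j+1]$ for some $j\ge 0$ the path up to time $n-1$ is covered by \eqref{S:4} with $u=j$, $v=j+1$, and the final step costs $\mathbf{P}(S_1\le -j)$; the resulting series $\sum_{j\ge 0}\mathbf{P}(S_1\le -j)(2+j)^{\alpha-1}$ is finite because \eqref{stable1} gives $\mathbf{E}\big((-S_1)^{\alpha}\mathbf{1}_{\{S_1<0\}}\big)<\infty$. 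This use of the left-tail assumption is essential for \eqref{S:5} and is missing from your write-up at exactly this point.
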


\textbf{Proof.} The proofs of (\ref{S:1})--(\ref{S:4}) have been given in \cite[Lemmas 2.1--2.4]{sta}. Here we only prove (\ref{S:5}).
Let $f(x):=\mathbf{P}(S_1\leq -x)$. Denote the event in \eqref{S:5} by {\blue$E_{\eqref{S:5}}$}. Applying the Markov property of $(S_i)$ at $n-1$, and using \eqref{S:4} we have that
\begin{align}
\mathbf{P} (E_{\eqref{S:5}}) 
\leq & \;\sum_{j=0}^\infty f(j)\,\mathbf{P}\Big(\underline{S}_{n-1}\geq-a, \min_{\lambda n\leq i<n}S_i>b,
b+j<S_{n-1}{\blue\leq}b+j+1\Big)\nonumber\\
\leq & \;c\,(1+a)n^{-1-\frac{1}{\alpha}}\sum_{j=0}^\infty f(j) (2+j)^{\alpha-1}.\nonumber\\
\end{align}
By (\ref{stable1}), $$\sum_{j=0}^\infty f(j) (2+j)^{\alpha-1}\leq c\,\mathbf{E}\big((-S_1)^\alpha\textbf{1}_{\{S_1<0\}}\big)<\infty. $$
Then the proof is completed.
\qed
}
\section{Proofs of Theorems 1.1--1.3}

In this section, first we prove Theorem 1.1 and Theorem 1.2. Noticing that $W_n\geq e^{-M_n}$, we only need to prove the the convergence part in Theorem \ref{T:1.2}, i.e.,
\begin{align}
\int_0^\infty \frac{dt}{tf(t)}<\infty \Rightarrow \limsup_{n\to \infty} \frac{n^\frac{1}{\alpha}W_n}{f(n)}=0,\;\;\;  \mathbf{P}^*\mbox{--a.s.} \,\,; \label{conver}
\end{align}
and the divergence part in Theorem \ref{T:1.1}, i.e.,
\begin{align}
\int_0^\infty \frac{dt}{te^{f(t)}}=\infty \Rightarrow \mathbf{P}^*(M_n-\frac{1}{\alpha}\log n<-f(n),\quad \mbox{i.o.})=1. \label{div}
\end{align}

We define the set containing brothers of vertex $x$ by $\Omega(x)$, i.e., $\Omega(x)=\{y:y_{|y|-1}=x_{|x|-1},y\neq x\}$.
For $\beta\ge 0$, define $$W_n^{\beta}:=\sum_{|x|=n}e^{-V(x)}
\mathbf{1}_{\{\underline{V}(x)\geq-\beta\}},$$
 where
$\underline{V}(x):=\min_{0\leq i\leq|x|}V(x_i).$

To prove (\ref{conver}), we need the following  lemma. 

\begin{lemma}\label{L:3.1}
Assume \eqref{stable0}, \eqref{stable1}--\eqref{stable3}. For any $\beta\geq0$, there exists a constant $c$ such that for any $1<n\leq m$ and $\lambda>0$, we have
\begin{align}
\mathbf{P}(\max_{n\leq k\leq m}k^{\frac{1}{\alpha}}W^{(\beta)}_k>\lambda)\leq c~\Big(\frac{\log n}{n^{\frac{1}{\alpha}}}+\frac{1}{\lambda}(\frac{m}{n})^{\frac{1}{\alpha}}\Big).
\end{align}
\end{lemma}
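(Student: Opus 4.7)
The plan is to exhibit $(W^{(\beta)}_k)_{k\ge 0}$ as a nonnegative supermartingale with respect to its natural filtration $(\mathcal F_k)$ and then to apply Doob's maximal inequality together with the stable random walk estimate \eqref{S:1}. This will produce a bound of the form $c(1+\beta)(m/n)^{1/\alpha}/\lambda$, which absorbs into the stated estimate since the term $\log n/n^{1/\alpha}$ is nonnegative and the constant $c$ is allowed to depend on $\beta$.

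The first step is to establish the supermartingale property. Decomposing the sum defining $W^{(\beta)}_{k+1}$ over the children of particles $u$ at level $k$ with $\underline V(u)\ge-\beta$, namely
\begin{align*}
W^{(\beta)}_{k+1}=\sum_{|u|=k,\,\underline V(u)\ge-\beta}\;\sum_{y:\,y_{|y|-1}=u}e^{-V(y)}\mathbf 1_{\{V(y)\ge-\beta\}},
\end{align*}
and taking conditional expectation given $\mathcal F_k$, the branching property together with the normalization $\mathbf E\big[\sum_{|z|=1}e^{-V(z)}\big]=1$ from \eqref{stable0} shows that for each such $u$ the inner sum has conditional mean at most $e^{-V(u)}$. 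Summation over $u$ yields $\mathbf E[W^{(\beta)}_{k+1}\mid\mathcal F_k]\le W^{(\beta)}_k$.

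Next, by the many-to-one formula \eqref{manytoone},
\begin{align*}
\mathbf E[W^{(\beta)}_n]=\mathbf E\big[e^{S_n}\cdot e^{-S_n}\mathbf 1_{\{\underline S_n\ge-\beta\}}\big]=\mathbf P(\underline S_n\ge-\beta)\le c_1(1+\beta)/n^{1/\alpha},
\end{align*}
where the last inequality is \eqref{S:1}. Doob's weak maximal inequality, applied to the nonnegative supermartingale $(W^{(\beta)}_k)_{k\ge n}$ on the interval $[n,m]$, gives for any $x>0$ the bound $\mathbf P(\max_{n\le k\le m}W^{(\beta)}_k>x)\le c_1(1+\beta)/(x\,n^{1/\alpha})$. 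Finally, since $k^{1/\alpha}\le m^{1/\alpha}$ for $n\le k\le m$, the event $\{\max_k k^{1/\alpha}W^{(\beta)}_k>\lambda\}$ is contained in $\{\max_k W^{(\beta)}_k>\lambda/m^{1/\alpha}\}$, and the choice $x=\lambda/m^{1/\alpha}$ above yields the desired inequality.

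The main delicate point is the supermartingale verification, where one must split $\mathbf 1_{\{\underline V(y)\ge-\beta\}}$ into the ancestor condition $\underline V(u)\ge-\beta$ and the single-step condition $V(y)\ge-\beta$ so that the conditional expectation reduces to a one-step mean controlled by \eqref{stable0}. Everything else is a routine application of Doob's inequality together with the random walk bound \eqref{S:1} from Section~2.
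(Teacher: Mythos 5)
Your proof is correct, but it takes a genuinely different (and more economical) route than the paper. The paper does not use the supermartingale structure of $W^{(\beta)}_k$ itself: instead it introduces the auxiliary process $W^{(\beta,n)}_k=\sum_{|x|=k}e^{-V(x)}\mathbf{1}_{\{\underline{V}(x_n)\geq-\beta\}}$, in which the barrier is imposed only up to generation $n$, so that $(W^{(\beta,n)}_k)_{k\geq n}$ is a true martingale; Doob's inequality plus the many-to-one formula and \eqref{S:1} then give the term $\frac{1}{\lambda}(\frac{m}{n})^{\frac{1}{\alpha}}$, while the discrepancy between $W^{(\beta,n)}_k$ and $W^{(\beta)}_k$ on $[n,m]$ is controlled by the event $\{\min_{n\leq k\leq m}\min_{|x|=k}V(x)<-\beta\}$, estimated via $\mathbf{P}(\inf_{x\in\mathbf{T}}V(x)<-\log n)\leq \frac1n$ and a second many-to-one computation; this is precisely the source of the $\frac{\log n}{n^{1/\alpha}}$ term in the statement. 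You instead observe that $W^{(\beta)}_k$ is a nonnegative supermartingale (dropping the one-step indicator and using $\mathbf{E}\sum_{|z|=1}e^{-V(z)}=1$), and apply the maximal inequality for nonnegative supermartingales directly, with $\mathbf{E}[W^{(\beta)}_n]=\mathbf{P}(\underline{S}_n\geq-\beta)\leq c_1(1+\beta)n^{-1/\alpha}$ by many-to-one and \eqref{S:1}. This yields the single bound $c(1+\beta)\frac{1}{\lambda}(\frac{m}{n})^{\frac{1}{\alpha}}$, which is in fact stronger than the stated estimate (no $\frac{\log n}{n^{1/\alpha}}$ term is needed), and the dependence of the constant on $\beta$ is allowed by the lemma and is equally present in the paper's bound. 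The only point to be scrupulous about is the one you flag: the supermartingale verification requires splitting $\mathbf{1}_{\{\underline V(y)\geq-\beta\}}$ into the ancestral condition on the parent and the one-step condition on the child, and your computation there is sound; the maximal inequality $\mathbf{P}(\max_{n\leq k\leq m}X_k\geq x)\leq \mathbf{E}[X_n]/x$ for nonnegative supermartingales is standard (optional stopping at the first entrance time). In short, your argument is valid, shorter than the paper's, and suffices for the later application in the proof of \eqref{conver}, where only summability over the dyadic blocks is needed.
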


\begin{proof}
We introduce another martingale related to $W^\beta_k$:
\begin{align}
W^{(\beta,n)}_k:=\sum_{|x|=k}e^{-V(x)}\mathbf{1}_{\{\underline{V}(x_n)\geq-\beta\}}, \quad n\leq k\leq m+1,\nonumber
\end{align}
where $\underline{V}(x_n):=\min_{1\le i\leq n}V(x_i)$. Therefore,
\begin{align}
\mathbf{P}(\max_{n\leq k\leq m}k^{\frac{1}{\alpha}}W^{(\beta)}_k>\lambda)&\leq \mathbf{P}(\max_{n\leq k\leq m}k^{\frac{1}{\alpha}}W^{(\beta,n)}_k>\lambda)+ \mathbf{P}(\min_{n\leq k\leq m}\min_{|x|=k}V(x)<-\beta)\nonumber.
\end{align}
By the branching property, for $n\leq k\leq m$, we have that
$$\mathbf{E}(W^{(\beta,n)}_{k+1}|\mathcal{F}_k)=W^{(\beta,n)}_k. $$
Hence, by Doob's maximal inequality,
\begin{align}
\mathbf{P}(\max_{n\leq k\leq m}k^{\frac{1}{\alpha}}W^{(\beta,n)}_k\geq\lambda)\leq\frac{m^\frac{1}{\alpha}}{\lambda}
\mathbf{E}(W^{(\beta,n)}_n)=\frac{m^\frac{1}{\alpha}}{\lambda}
\mathbf{E}(W^\beta_n).
\end{align}
From (\ref{manytoone}) and $\eqref{S:1}$, it follows that
\begin{align}
\mathbf{P}(\max_{n\leq k\leq m}k^{\frac{1}{\alpha}}W^{(\beta,n)}_k\geq\lambda)\leq c \frac{1}{\lambda}(\frac{m}{n})^{\frac{1}{\alpha}}. \label{L:3.1_1}
\end{align}
On the other hand,  By Aidekon \cite[P. 1403]{A13} we know $\mathbf{P}(\inf_{x\in\textbf{T}}V(x)<-x)\leq e^{-x}$ for $x\geq0$. Hence 
\begin{align}
&\mathbf{P}(\min_{n\leq k\leq m}\min_{|x|=k}V(x)<-\beta\,)\nonumber\\ &\leq\mathbf{P}(\inf_{x\in\textbf{T}}V(x)<-\log n)
+\mathbf{P}(\min_{n\leq k\leq m}\min_{|x|=k}V(x)<-\beta,\inf_{x\in\textbf{T}}V(x)\geq-\log n) \nonumber\\
&\leq \frac{1}{n}+\sum_{k=n}^m\mathbf{E}(\sum_{|x|=k}\mathbf{1}_{\{V(x)<-\beta,V(x_n)\geq-\beta,\ldots,V(x_{k-1})\geq-\beta,
\underline{V}(x)\geq-\log n\}}) .\nonumber
\end{align}
By (\ref{manytoone}) and \eqref{S:1},
\begin{align}
\mathbf{P}(\min_{n\leq k\leq m}\min_{|x|=k}V(x)<-\beta\,)\leq \frac{1}{n}+c \,\mathbf{P}(\underline{S}_n\geq-\log n)\leq c~\frac{\log n}{n^{\frac{1}{\alpha}}},\nonumber
\end{align}
which together with \eqref{L:3.1_1}, completes the proof.
\end{proof}
\qed

\noindent\textbf{Proof of  (\ref{conver}). }Let $n_j=2^j$. According to Lemma \ref{L:3.1}, for all large $j$ we have
\begin{align}
\mathbf{P}\big(\max_{n_j\leq k\leq n_{j+1}}k^{\frac{1}{\alpha}}W^\beta_k>f(n_j)\big)\leq c\,\bigg( \frac{\log n_j}{n_j^{\frac{1}{\alpha}}}+\frac{2^{\frac{1}{\alpha}}}{f(n_j)}\bigg). \label{F:3.6}
\end{align}
By our assumption for $f$, $$\sum_{j\geq j_0}\frac{1}{f(n_j)}\leq\sum_{j\geq j_0}\frac{1}{\log2}\int_{n_{j-1}}^{n_j}\frac{1}{f(x)x}\mathrm{d}x<\infty. $$ Hence
\begin{align*}
  \sum_{j\geq j_0} \mathbf{P}\big(\max_{n_j\leq k\leq n_{j+1}}k^{\frac{1}{\alpha}}W^\beta_k>f(n_j)\big)<\infty.
\end{align*} By Borel-Cantelli Lemma, for all large $k$,
\begin{align}
k^{\frac{1}{\alpha}}W^\beta_k\leq f(k),\quad\mathbf{P}\mbox{--a.s.} \nonumber
\end{align}
  Letting $\beta\to \infty$, we have $k^{\frac{1}{\alpha}}W_k\leq f(k),\quad\mathbf{P}\mbox{--a.s.}$ As a consequence,
\begin{align}
\limsup_{k\rightarrow\infty}\frac{k^{\frac{1}{\alpha}}W_k}{f(k)}\leq1,\quad\mathbf{P}\mbox{--a.s.} \nonumber
\end{align} 
Replacing $f$ by $\varepsilon f$, and letting $\varepsilon\rightarrow0$, we complete the proof.

\qed

  Fix $K\geq0$.  Now we define  for $n<k\leq \alpha n$,
\begin{align}
&A_k^{(n,\lambda)}:=\big\{x:|x|=k,V(x_i)\geq a_i^{(n,\lambda)}, 0\leq i\leq k,V(x)\leq\frac{1}{\alpha}\log n-\lambda+K\big\},\nonumber\\
&B_k^{(n,\lambda)}:=\big\{x:|x|=k,\sum_{u\in\Omega(x_{i+1})}(1+(V(u)-a_i^{(n,\lambda)})_+)e^{-(V(u)-a_i^{(n,\lambda)})}\leq c'e^{-b_i^{(k,n)}},\; 0\leq i\leq k\!-\!1\big\},\nonumber
\end{align}
where $
a_i^{(n,\lambda)}=\mathbf{1}_{\{\frac{\alpha}{4}n<i\leq k\}}(\frac{1}{\alpha}\log n-\lambda)\,$,
$b_i^{(k,n)}=\mathbf{1}_{\{0\leq i\leq\frac{\alpha}{4}n\}}i^{\frac{\gamma}{2}}+\mathbf{1}_{\{\frac{\alpha}{4}n<i\leq\alpha n\}}(k-i)^{\frac{\gamma}{2}}$,   $K>0$, $\gamma=\frac{1}{\alpha(\alpha\!+\!1)}$ and $c'$ is a positive  constant chosen as in \cite[Lemma 7.1]{sta}. 

Lemmas~\ref{L:3.2} and \ref{L:3.3} are preparing works for the proof of (\ref{div}).

\begin{lemma}\label{L:3.2}
Assume \eqref{stable0}, \eqref{stable1}--\eqref{stable3}.  There exist some positive constants $K$ and
$c_6, c_7$ such that for all $n\geq2$, $0\leq\lambda\leq\frac{1}{\,2\alpha}\log n$,
\begin{align}
c_6e^{-\lambda}\leq\mathbf{P}\Big(\bigcup_{k=n\!+\!1}^{\alpha n}A_k^{(n,\lambda)}\cap B_k^{(n,\lambda)}\Big)\leq c_7\,e^{-\lambda} \nonumber,
\end{align}

\end{lemma}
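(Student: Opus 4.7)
The plan is to bound $\mathbf{P}\big(\bigcup_k A_k^{(n,\lambda)}\cap B_k^{(n,\lambda)}\big)$ from above by a first-moment computation using \eqref{manytoone} and \eqref{S:4}, and from below by a truncated second-moment (Paley--Zygmund) argument. This follows the strategy of Aid\'ekon--Shi and Hu, now cast in the $\alpha$-stable setting via \eqref{S:1}--\eqref{S:5}.

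For the upper bound, I would drop the condition $B_k^{(n,\lambda)}$ and apply Markov's inequality:
\[
\mathbf{P}\Big(\bigcup_{k=n+1}^{\alpha n} A_k^{(n,\lambda)}\cap B_k^{(n,\lambda)}\Big)\leq \sum_{k=n+1}^{\alpha n}\mathbf{E}\big(\#A_k^{(n,\lambda)}\big).
\]
The many-to-one formula \eqref{manytoone}, applied via $\sum_{|x|=k}1=\sum_{|x|=k}e^{-V(x)}e^{V(x)}$, converts the expected count into $\mathbf{E}\big(e^{S_k}\mathbf{1}_{E_k}\big)$, where $E_k:=\{S_i\geq a_i^{(n,\lambda)}\text{ for }0\le i\le k,\,S_k\leq \tfrac{1}{\alpha}\log n-\lambda+K\}$. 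On $E_k$ one has $e^{S_k}\leq e^K n^{1/\alpha}e^{-\lambda}$, and $\mathbf{P}(E_k)$ is bounded by \eqref{S:4} after a Markov step at $\lfloor\alpha n/4\rfloor$ (where $a_i^{(n,\lambda)}$ jumps from $0$ to $\tfrac{1}{\alpha}\log n-\lambda$); this gives $\mathbf{P}(E_k)=O(k^{-1-1/\alpha})$. Summing over $k\in[n+1,\alpha n]$ yields the desired $O(e^{-\lambda})$.

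For the lower bound, set $N:=\sum_{k=n+1}^{\alpha n}\#(A_k^{(n,\lambda)}\cap B_k^{(n,\lambda)})$; by Paley--Zygmund, $\mathbf{P}(N>0)\geq (\mathbf{E}N)^2/\mathbf{E}(N^2)$, so it suffices to show $\mathbf{E}N\geq c\,e^{-\lambda}$ and $\mathbf{E}(N^2)\leq c\,e^{-\lambda}$. The first-moment lower bound relies on a matching lower bound for the walk event of \eqref{S:4} (available in the $\alpha$-stable domain of attraction, cf.~\cite{sta}) together with the observation that, after the many-to-one reduction, the sibling truncation $B_k^{(n,\lambda)}$ fails only on an asymptotically negligible subset of random-walk paths; here the envelope $b_i^{(k,n)}\sim i^{\gamma/2}$ with $\gamma=1/[\alpha(\alpha+1)]$ and the constant $c'$ from \cite[Lemma 7.1]{sta} are arranged exactly so that a Chebyshev estimate on the expected sibling sum $\sum_{u\in\Omega(x_{i+1})}(1+(V(u)-a_i^{(n,\lambda)})_+)e^{-(V(u)-a_i^{(n,\lambda)})}$ forces $B_k^{(n,\lambda)}$ with probability bounded away from $0$. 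For the second moment, I would decompose pairs $(x,y)$ with $|x|=k_1,|y|=k_2$ according to the generation $j=|x\wedge y|$ of their common ancestor, apply the branching property at $j$, and estimate each factor by many-to-one: the condition $B_k^{(n,\lambda)}$ contributes a factor $c'e^{-b_j^{(k,n)}}$ at the branching level, producing sums over $j$, $k_1$ and $k_2$ that converge geometrically and give $\mathbf{E}(N^2)\leq c\,e^{-\lambda}$.

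The principal obstacle is the second-moment estimate, since one must track the joint effect of the time-inhomogeneous barrier $a_i^{(n,\lambda)}$ (with its jump at $i=\alpha n/4$) and the sibling truncation $B_k^{(n,\lambda)}$, keeping constants uniform in the break-time ratio $\alpha n/(4k)\in[1/4,\alpha/4]$. The truncation $B_k^{(n,\lambda)}$ is indispensable here: without it, pairs $(x,y)$ whose common ancestor has a sibling making an anomalously large (heavy-tailed) contribution---allowed by the $\alpha$-stable tails \eqref{stable2}---would dominate the second moment, a pathology absent in the finite-variance case of \cite{H15}. The polynomial envelope $b_i^{(k,n)}$ is calibrated precisely to suppress this contribution while remaining compatible with the first-moment lower bound.
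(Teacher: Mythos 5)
Your upper bound is exactly the paper's argument: discard $B_k^{(n,\lambda)}$, use \eqref{manytoone} to write the expected count as $\mathbf{E}\big(e^{S_k}\mathbf{1}_{E_k}\big)\le e^{s+K}\,\mathbf{P}(E_k)$ with $s=\frac{1}{\alpha}\log n-\lambda$, bound $\mathbf{P}(E_k)\le c\,n^{-1-\frac{1}{\alpha}}$ by \eqref{S:4}, and sum over the roughly $n$ values of $k$. For the lower bound the paper does not run any argument in-house: it invokes \cite[Lemma 7.1]{sta}, noting that the same proof goes through with $\frac{1}{\alpha}\log n$ replaced by $\frac{1}{\alpha}\log n-\lambda$; your truncated second-moment (Paley--Zygmund) plan, with $\mathbf{E}N\ge c\,e^{-\lambda}$ and $\mathbf{E}(N^2)\le c\,e^{-\lambda}$, is precisely the argument behind that citation, so in substance the two routes coincide.

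One step of your lower-bound sketch would fail as literally stated, namely forcing $B_k^{(n,\lambda)}$ by ``a Chebyshev estimate on the expected sibling sum''. The first moment $\mathbf{E}\,\#\big(A_k^{(n,\lambda)}\cap B_k^{(n,\lambda)}\big)$ has to be evaluated through the spinal decomposition, i.e.\ as $\mathbf{E}_{\mathbf{Q}}\big(e^{V(\omega_k)}\mathbf{1}_{\{\omega_k\in A_k^{(n,\lambda)}\cap B_k^{(n,\lambda)}\}}\big)$, and under $\mathbf{Q}$ the brothers of the spine are produced by the size-biased offspring law; the mean of their contribution is of the order of $\mathbf{E}(X^2)+\mathbf{E}(X\widetilde X)$, which \eqref{stable3} does not make finite (only logarithmic moments of $X$ and $\widetilde X$ are assumed), so a Markov/Chebyshev bound on that expectation gives nothing. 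What actually works, and is what \cite{sta} does following Aid\'ekon, is a tail estimate: on the barrier event the spine sits at height $S_i-a_i^{(n,\lambda)}$ above the reference level, typically of order $i^{1/\alpha}\gg b_i^{(k,n)}$, and $B_k^{(n,\lambda)}$ can fail at step $i$ only if the logarithm of the sibling functional exceeds roughly $(S_i-a_i^{(n,\lambda)})-b_i^{(k,n)}$; the $\mathbf{Q}$-probability of this, summed over $i$, is made small by \eqref{stable3} combined with \eqref{S:1}--\eqref{S:4}. With that substitution your outline is the right one: the envelope $b_i^{(k,n)}$ and the constant $c'$ give $\mathbf{E}N\ge c\,e^{-\lambda}$, and the second moment is controlled by decomposing at the split generation $j$, where the truncation supplies the factor $c'e^{-b_j}$ exactly as the paper itself exploits in the proof of Lemma~\ref{L:3.3}.
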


\begin{proof}
The proof of the lower bound goes in the same way in \cite[Lemma 7.1]{sta} by replacing $\frac{1}{\alpha}\log n$ to $\frac{1}{\alpha}\log n-\lambda$. Let $s:=\frac{1}{\alpha}\log n-\lambda$. Applying (\ref{manytoone}) and \eqref{S:4}, we get
\begin{align*}
\mathbf{P}\Big(\bigcup_{k=n+1}^{\alpha n}A_k^{(n,\lambda)}\Big)&\leq \sum_{k=n+1}^{\alpha n}\mathbf{E}
\Big(\sum_{|x|=k}\mathbf{1}_{\{V(x_i)\geq a_i^{(n,\lambda)},\, i\leq k, V(x)\leq s+K\}}\Big)\nonumber\\
&=\sum_{k=n+1}^{\alpha n}\mathbf{E}\Big(e^{S_k}\mathbf{1}_{\{S_i\geq a_i^{(n,\lambda)},\, i\leq k, S_k\leq s+K\}}\Big)\nonumber\\
&\leq\sum_{k=n+1}^{\alpha n}e^{s+K}\mathbf{P}\big(S_i\geq a_i^{(n,\lambda)}, i\leq k, S_k\leq s+K\big)\nonumber\\
&\leq c\sum_{k=n+1}^{\alpha n}e^{s+K} \frac{1}{n^{1+\frac{1}{\alpha}}} \nonumber\\
&\leq c e^{-\lambda},
\end{align*}
completing the proof.
\end{proof}
\qed

 Denote the natural filtration of the branching random walk by $(\mathcal{F}_n, n\geq0)$. Here we introduce the well-known change-of-probabilities setting in Lyons \cite{Ly} and spinal decomposition. With the nonnegative martingale $W_n$, we can define a new probability measure $\mathbf{Q}$ such that for any $n\geq1$,
\begin{align}
\mathbf{Q}\big|_{\mathcal{F}_n}:=W_n\cdot\mathbf{P}\big|_{\mathcal{F}_n},
\end{align}
where $\mathbf{Q}$ is defined on $\mathcal{F}_\infty(:=\!\!\vee_{n\geq0}\mathcal{F}_n)$.  Similarly we denote by $\mathbf{Q}_a $ the probability distribution associated to the branching random walk starting from $a$, and $\mathbf{E}_Q$ the corresponding expectation related $\mathbf{Q}(:=\mathbf{Q}_0)$.  Let us give a description of the branching random walk under $\mathbf{Q}$. We start from one single particle $\omega_0\!\!:=\!\!\varnothing$, located at
$V(\omega_0)=0$. At time $n+1$, each particle $\upsilon$ in the $n$th generation dies and gives birth to a point process independently distributed as $(V(x),|x|=1)$ under $\mathbf{P}_{V(\upsilon)}$ except one particle $\omega_n$, which dies and produces a point process distributed as $(V(x),|x|=1)$ under $\mathbf{Q}_{V(\omega_n)}$. While $\omega_{n+1}$ is chosen to be $\mu$  among the children of $\omega_n$, proportionally to $e^{-V(\mu)}$. Next we state the following fact about the spinal decomposition. \\

\textbf{Fact 7.1 (Lyons \cite{Ly})}.
Assume \eqref{stable0}. \\
(\romannumeral1) For any $|x|=n$, we have
\begin{align}
\mathbf{Q}(\omega_n=x|\mathcal{F}_n)=\frac{\,e^{-V(x)}}{W_n}.\nonumber
\end{align}
(\romannumeral2)
The spine process $(V(\omega_n))_{n\geq0}$ under $\mathbf{Q}$ has the distribution of $(S_n)_{n\geq0}$ (introduced in Section 2) under $\mathbf{P}$. \\
(\romannumeral3) Let
$\mathcal{G}_\infty:=\sigma\{\omega_j,V(\omega_j),\Omega(\omega_j), (V(u))_{u\,\in\,\Omega(\omega_j)},j\geq1\}  \nonumber$ be the $\sigma$-algebra of the spine and its brothers.  Denote by $\{\mu\nu,|\nu|\geq0\}$ the subtree of $\textbf{T}$ rooted at $\mu$. For any $\mu\in\Omega(\omega_k)$, the induced branching random walk $(V(\mu\nu),|\nu|\geq0)$ under $\mathbf{Q}$ and conditioned on $\mathcal{G}_\infty$ is distributed as $\mathbf{P}_{V(\mu)}$.

For $n\geq2$ and $0\leq\lambda\leq\frac{1}{2\alpha}\log n$, we define
\begin{align}
E(n,\lambda):=\bigcup_{k=n\!+\!1}^{\alpha n}(A_k^{(n,\lambda)}\cap B_k^{(n,\lambda)}).
\end{align}

\begin{lemma}\label{L:3.3}
Assume \eqref{stable0}, \eqref{stable1}--\eqref{stable3}. There exists  $c>0$ such that for any $n\geq2,\;0\leq\lambda\leq\frac{1}{2\alpha}\log n$, $m\geq4n$ and $0\leq \mu\leq \frac{1}{2\alpha}\log m$,
\begin{align}
\mathbf{P}\big(E(n,\lambda)\cap F(m,\mu)\big)\leq c\,e^{-\lambda-\mu}+c\,e^{-\mu}\frac{\log n}{n^{\frac{1}{\alpha}}} . \nonumber
\end{align}
\end{lemma}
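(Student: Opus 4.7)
The plan is to condition on $\mathcal{F}_{\alpha n}$ and bound the conditional probability of $F(m,\mu)$ by a first-moment quantity at that generation. Because $m\ge 4n$ forces $\alpha n\le \alpha m/4$, the constraint $a_i^{(m,\mu)}=0$ for $0\le i\le\alpha m/4$ means that every witness of $F(m,\mu)$ has its time-$\alpha n$ ancestor $y$ satisfying $\underline V(y)\ge 0$. A union bound over $y$, the branching property, and the many-to-one formula applied in the subtree of $y$, together with a translated application of \eqref{S:4}, give $\mathbf{P}_{V(y)}(F')\le c\,e^{-\mu}e^{-V(y)}(1+V(y))$, where $F'$ denotes the analogue of $F$ relative to that subtree. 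Hence
$$\mathbf{P}(F(m,\mu)\mid\mathcal{F}_{\alpha n})\;\le\;c\,e^{-\mu}\widehat W_{\alpha n},\qquad \widehat W_{\alpha n}:=\sum_{|y|=\alpha n}\mathbf{1}_{\{\underline V(y)\ge 0\}}e^{-V(y)}(1+V(y)),$$
and it remains to prove $\mathbf{E}[\mathbf{1}_{E(n,\lambda)}\widehat W_{\alpha n}]\le c\,e^{-\lambda}+c\log n/n^{1/\alpha}$.

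Expanding $\mathbf{1}_{E(n,\lambda)}$ by a union bound over its witness $x$ with $|x|=k\in(n,\alpha n]$ and unfolding $\widehat W_{\alpha n}$ as a sum over $y$, I split the resulting double sum over pairs $(x,y)$ by tree geometry: \emph{Case A}, where $y$ is a descendant of $x$; and \emph{Case B}, where the most recent common ancestor $u=x\wedge y$ has $|u|<k$.

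In Case A, conditioning on $\mathcal{F}_k$ and using the branching property reduces the inner sum over descendants of $x$ at generation $\alpha n$ (via many-to-one in the subtree) to $e^{-V(x)}\phi(V(x))$, where $\phi(a):=\mathbf{E}[(1+a+S_\ell)\mathbf{1}_{\{\underline S_\ell\ge-a\}}]\le c(1+a)$ uniformly in $\ell=\alpha n-k$; the bound on $\phi$ uses \eqref{S:1} together with the standard estimate $\mathbf{E}[S_\ell^+\mathbf{1}_{\{\underline S_\ell\ge -a\}}]\le c(1+a)$. A second many-to-one on $x$ reduces the task to bounding $\mathbf{E}[\phi(S_k)\mathbf{1}_{\{S_i\ge a_i^{(n,\lambda)},\,S_k\le s+K\}}]$ with $s=\frac{1}{\alpha}\log n-\lambda$. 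Rather than extracting $\phi(S_k)\le c(1+s)$ bluntly (which would leave an extraneous $(\log n)^\alpha$), one partitions $S_k-s\in[j,j+1]$ for $0\le j\le K$ and applies \eqref{S:4} slab by slab, giving $c(1+s)/k^{1+1/\alpha}$; summing over $k\in(n,\alpha n]$ produces a Case A contribution of order $\log n/n^{1/\alpha}$, which becomes the second term of the lemma after the $e^{-\mu}$ factor.

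For Case B, I decompose on $u$ at generation $j<k$ and on the two distinct children $v,w$ of $u$ through which $x$ and $y$ respectively descend. The branching property at generation $j+1$ renders the subtrees of $v$ and $w$ conditionally independent, so the expectation factorises into an $x$-factor (the first moment of $A_k^{(n,\lambda)}$-witnesses in the subtree of $v$, treated exactly as in the upper bound in Lemma~\ref{L:3.2} and producing $e^{-\lambda}$) and a $y$-factor (the first moment of the $\widehat W_{\alpha n}$-summand in the subtree of $w$, bounded by $c(1+V(u))$ through the same $\phi$-estimate as above). The principal obstacle is that the sibling sum in $B_k^{(n,\lambda)}$ at level $j+1$ involves $V(w)$, so the two factors are not literally independent; I bypass this by the crude domination $\mathbf{1}_{B_k^{(n,\lambda)}}\le 1$, losing only a constant. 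The resulting two-spine random-walk estimate, carried out in the spirit of Lemma~\ref{L:3.2} and of \cite[Lemma~7.1]{sta}, is integrated against the spine measure via many-to-one and yields an $e^{-\lambda}$ contribution, which after multiplication by $e^{-\mu}$ gives the first term of the lemma.
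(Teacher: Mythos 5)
Your first reduction is sound: since $m\ge 4n$ forces $\lfloor\alpha n\rfloor\le\alpha m/4$, conditioning on $\mathcal{F}_{\lfloor\alpha n\rfloor}$, a union bound over the time-$\lfloor\alpha n\rfloor$ ancestor $y$, the branching property, many-to-one and \eqref{S:4} do give $\mathbf{P}(F(m,\mu)\mid\mathcal{F}_{\lfloor\alpha n\rfloor})\le c\,e^{-\mu}\widehat W_{\lfloor\alpha n\rfloor}$, and your Case~A (the $y$-descendant-of-$x$ case) correctly produces the $\log n/n^{1/\alpha}$ term — it is the analogue of the paper's pure-spine term $I_1(k,l)$, though note that the bound $\mathbf{E}\big[(1+a+S_\ell)\mathbf{1}_{\{\underline S_\ell\ge-a\}}\big]\le c(1+a)$ you invoke is not among \eqref{S:1}--\eqref{S:5} and needs the finiteness of the mean descending ladder height (true under \eqref{stable1}, but it should be justified).

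The genuine gap is in Case~B, where you write that you "bypass" the dependence issue by the domination $\mathbf{1}_{B_k^{(n,\lambda)}}\le 1$, "losing only a constant". That step destroys the proof. After factorising at the branch point $u$, the $x$-factor contributes a weight of the form $c\,e^{-V(v)}\big(1+(V(v)-\cdot)_+\big)$ for the child $v$ leading to $x$, and your $y$-factor contributes $c\,e^{-V(w)}\big(1+V(w)\big)$ for the sibling child $w$; taking expectation over the offspring point process of $u$ then requires a bound on $\mathbf{E}\big[\sum_{v\ne w}(1+V(v)_+)e^{-V(v)}(1+V(w)_+)e^{-V(w)}\big]$, i.e.\ essentially a second moment of $X+\widetilde X$. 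Under \eqref{stable3} only $L(\log L)^{\alpha}$-type moments are assumed, so this quantity may be infinite: the indicator of $B_k^{(n,\lambda)}$ is not a technical nuisance but exactly the device that replaces this missing moment, since on $B_k^{(n,\lambda)}$ the sibling sum at level $|u|$ is bounded deterministically by $c'e^{-b_{|u|}^{(k,n)}}$ (and this exponential decay in the branch-point index is also what makes the double sum over $j=|u|$ and $k$ come out as $c\,e^{-\lambda}$; dropping it would leave polynomially divergent sums even if the moment were finite). The purported obstacle — that the level-$(|u|+1)$ constraint in $B_k^{(n,\lambda)}$ involves $V(w)$ — is not real: that constraint is measurable with respect to the positions of the children of $u$, and conditionally on those positions the two subtrees still factorise, so you may keep precisely that one constraint (discarding the rest of $B_k^{(n,\lambda)}$) and use it to dominate the sum over $w$. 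This is exactly how the paper proceeds, in mirrored form: under $\mathbf{Q}$ the $F(m,\mu)$-witness is placed on the spine, and its $B_k^{(m,\mu)}$-condition bounds $\sum_{x\in\Omega(\omega_p)}(1+V(x)_+)e^{-V(x)}\le c'e^{-(p-1)^{\gamma/2}}$ before \eqref{S:3}--\eqref{S:4} are applied. As written, your Case~B does not yield the $c\,e^{-\lambda-\mu}$ term, so the proof is incomplete at this point.
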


\begin{proof}
For convenience, we write $s\!:=\!\frac{1}{\alpha}\log n\!-\!\lambda, \;t\!:=\!\frac{1}{\alpha}\log m\!-\!\mu$ .
\begin{align}
\mathbf{P}\big(E(n,\lambda)\cap F(m,\mu)\big)&\leq\mathbf{E}\Big(\mathbf{1}_{E(n,\lambda)}\sum_{k=m+1}^{\alpha m}\sum_{|x|=k}\mathbf{1}_{\{x\in A_k^{(m,~\!\mu)}\cap B_k^{(m,~\!\mu)}\}}\Big)\nonumber\\
&=\sum_{k=m+1}^{\alpha m} \mathbf{E_Q}\Big(\mathbf{1}_{E(n,\lambda)}e^{V(\omega_k)}\mathbf{1}_{\{\omega_k\in A_k^{(m,~\!\mu)}\cap B_k^{(m,~\!\mu)}\}}\Big)\nonumber\\
&\leq e^{t+K}\sum_{k=m+1}^{\alpha m}\sum_{l=n+1}^{\alpha n} \mathbf{E_Q}\Big(\sum_{|x|=l}
\mathbf{1}_{\{x\in A_l^{(n,~\!\lambda)}\cap B_l^{(n,~\!\lambda)},\omega_k\in A_k^{(m,~\!\mu)}\cap B_k^{(m,~\!\mu)}\}}\Big)\nonumber\\
&=:e^{t+K}\sum_{k=m+1}^{\alpha m}\sum_{l=n+1}^{\alpha n}I(k,l)\label{L:3.3_2}.
\end{align}
Decomposing the sum on the brothers of the spine, we obtain
\begin{align}
I(k,l)&=\mathbf{Q}(\omega_l\in A_l^{(n,~\!\lambda)}\cap B_l^{(n,~\!\lambda)}, \omega_k\in A_k^{(m,~\!\mu)}\cap B_k^{(m,~\!\mu)})\nonumber\\
& +\sum_{p=1}^l\mathbf{E_Q}\Big(\mathbf{1}_{\{\omega_k\in A_k^{(m,~\!\mu)}\cap B_k^{(m,~\!\mu)}\}}\sum_{x\in\Omega(\omega_p)}f_{k,l,p}\big(V(x)\big)\Big) \nonumber\\
&=:I_1{(k,l)}+\sum_{p=1}^lJ{(k,l,p)},
\end{align}
where $f_{k,l,p}(V(x)):=\mathbf{E_Q}\Big(\sum_{u\ge x, |u|=l}\mathbf{1}_{\{u\in
A_l^{(n,~\!\lambda)}\cap B_l^{(n,~\!\lambda)}\}}\big|\mathcal{G}_\infty\Big)$.  Recalling \textbf{Fact 7.1(\romannumeral3)},
we obtain,
\begin{align}
f_{k,l,p}(x)&\leq \mathbf{E}_{x}\Big(\sum_{|\nu|=l-p}\mathbf{1}_{\{V(\nu_i)\geq a_{i+p}^{(n,\lambda)},0\leq i\leq l-p, V(\nu)\leq s+K\}}\Big)\nonumber\\
& \leq e^{-x+s+K}\mathbf{P}_x(S_i\geq a_{i+p}^{(n,\lambda)}, 0\leq i\leq l-p, S_{l-p}\leq s+K) \label{L:3.3_1},
\end{align}
where the last step is from (\ref{manytoone}). To estimate $\sum_{p=1}^lJ{(k,l,p)} $, we break the sum into
two parts. Firstly consider the case $1\leq p\leq\frac{\alpha n}{4}$. By \eqref{S:4}, we have
\begin{align}
f_{k,l,p}(x)\leq ce^{-x+s+K}\frac{\;1+x_+}{n^{1+\frac{1}{\alpha}}}.\nonumber
\end{align}
Consequently,
\begin{align}
\sum_{p=1}^{\frac{\alpha n}{4}}J{(k,l,p)}&\leq ce^sn^{-1-\frac{1}{\alpha}}\sum_{p=1}^{\frac{\alpha n}{4}} \mathbf{E_Q}\Big(\mathbf{1}_{\{\omega_k\in A_k^{(m,~\!\mu)}\cap B_k^{(m,~\!\mu)}\}}\sum_{x\in\Omega(\omega_p)}(1+V(x)_+)e^{-V(x)}\Big)\nonumber\\
&\leq ce^sn^{-1-\frac{1}{\alpha}}\sum_{p=1}^{\frac{\alpha n}{4}} \mathbf{E_Q}
\Big(\mathbf{1}_{\{\omega_k\in A_k^{(m,~\!\mu)}\cap B_k^{(m,~\!\mu)}\}}e^{-(p-1)^{\frac{\gamma}{2}}}\Big), \nonumber
\end{align}
where the last inequality comes from the definition of $B_k^{(m,~\!\mu)}$. Note that by \eqref{S:4},
$$\mathbf{Q}(\omega_k\in A_k^{(m,~\!\mu)})=\mathbf{P}(t\leq S_k\leq t+K, S_i\ge a_i^{(m,\mu)}, 0\leq i\leq k)\leq c m^{-1-\frac{1}{\alpha}}$$ for all $m<k\leq \alpha m$.  It follows that
\begin{align}
\sum_{p=1}^{\frac{\alpha n}{4}}J{(k,l,p)}\leq ce^sn^{-1-\frac{1}{\alpha}}\mathbf{Q}(\omega_k\in A_k^{(m,\mu)})\leq ce^sn^{-1-\frac{1}{\alpha}}m^{-1-\frac{1}{\alpha}} \label{L:3.3a}.
\end{align}
On the other hand, when $\frac{\alpha n}{4}<p\leq l$, returning to \eqref{L:3.3_1},
\begin{align}
f_{k,l,p}(x)&\leq e^{-x+s+K}\mathbf{P}_x(S_i\geq a_{i+p}^{(n,\lambda)}, 0\leq i\leq l-p, S_{l-p}\leq s+K)\nonumber\\
&\leq ce^{-x+s+K} \frac{\;1\!+\!(x\!-\!s)_+}{(1\!+\!l\!-\!p)^{1\!+\!\frac{1}{\alpha}}} \nonumber,
\end{align}
which is from \eqref{S:3}. Hence
\begin{align}
&\sum_{\frac{\alpha n}{4}<p\leq l}J{(k,l,p)}\nonumber\\
&\leq c e^{s+K}\sum_{\frac{\alpha n}{4}<p\leq l}\frac{1}{\,(1\!+\!l\!-\!p)^{1\!+\!\frac{1}{\alpha}}}\mathbf{E_Q}\Big(\mathbf{1}_{\{\omega_k\in A_k^{(m,~\!\mu)}\cap B_k^{(m,~\!\mu)}\}}\sum_{x\in\Omega(\omega_p)}e^{-V(x)}{\;\big(1\!+\!(V(x)\!-\!s)_+\big)}\Big)\nonumber\\
&\leq c  e^s \sum_{\frac{\alpha n}{4}<p\leq l} \frac{  e^{-(p-1)^{\frac{\gamma}{2}}}} {\,(1\!+\!l\!-\!p)^{1\!+\!\frac{1}{\alpha}}}\mathbf{Q}(\omega_k\in A_k^{(m,~\!\mu)}). \nonumber
\end{align}
As a consequence,
\begin{align}
\sum_{\frac{\alpha n}{4}<p\leq l}J{(k,l,p)} \leq c {e^s} e^{-n^{\frac{\gamma}{3}}}m^{-1-\frac{1}{\alpha}}. \label{L:3.3b}
 \end{align}
It remains to estimate $I_1(k,l)$ for $n<l\le \alpha n<\frac{\alpha m}{4}<k\le \alpha m$. Clearly,
\begin{align}
I_1(k,l)&\leq \mathbf{Q}(\omega_l\in A_l^{(n,~\!\lambda)}, \omega_k\in A_k^{(m,~\!\mu)})\nonumber\\
&=\mathbf{P}(S_i\ge a_i^{(n,\lambda)},0\le i\le l, S_l\le s+K, S_j\ge a_j^{(m,\mu)}, 0\le j\le k, S_k\le t+K).\nonumber
\end{align}
We use the Markov property at $l$ and \eqref{S:4} to arrive at
\begin{align}
I_1(k,l)&\leq \frac{c}{(k-l)^{1+\frac{1}{\alpha}}}\mathbf{E}\big((1+S_l)\mathbf{1}_{\{S_i\ge a_i^{(n,\lambda)},0\le i\le l, S_l\le s+K\}}\big)\nonumber\\
&\leq c (1+s+K)(k-l)^{-1-\frac{1}{\alpha}}l^{-1-\frac{1}{\alpha}} \nonumber,
\end{align}
which together with \eqref{L:3.3a} and \eqref{L:3.3b} leads to
\begin{align}
I(k,l)\leq  ce^s(n^{-1-\frac{1}{\alpha}}m^{-1-\frac{1}{\alpha}}+e^{-n^{\frac{\gamma}{3}}}m^{-1-\frac{1}{\alpha}})+
c (1+s+K)(k-l)^{-1-\frac{1}{\alpha}}l^{-1-\frac{1}{\alpha}} \nonumber.
\end{align}
Recalling \eqref{L:3.3_2}, we have
\begin{align}
\mathbf{P}\big(E(n,\lambda)\cap F(m,\mu)\big)&\leq e^{t\!+\!K}\sum_{k=m+1}^{\alpha m}\sum_{l=n+1}^{\alpha n}
c\left(e^s(n^{\!-\!1\!-\!\frac{1}{\alpha}}m^{-1-\frac{1}{\alpha}})+\!(1\!+\!s\!+\!K)(k\!-\!l)^{\!-\!1\!-\!\frac{1}{\alpha}}l^{\!-\!1\!-\!\frac{1}{\alpha}} \right)\\&\leq c\,e^{-\lambda-\mu}+c\,e^{-\mu}\frac{\log n}{n^{\frac{1}{\alpha}}} .\nonumber
\end{align}
\qed
\end{proof}
\noindent\textbf{Proof of (\ref{div})}. Let $f$ be the nondecreasing function such that $\int_0^\infty \frac{dt}{te^{f(t)}}=\infty$. By Erd\"{o}s \cite{Erdos},
we can assume that $\frac{1}{2}\log (\log t)\leq f(t)\leq 2\log (\log t)$ for all large $t$ without any loss of generality.
Let
\begin{align}
  F_x :=\{M_n+x\leq \frac{1}{\alpha}\log n-f(n),\quad \mbox{i.o.}\}, \;x\in\mathbb{R} \nonumber.
\end{align}
We are going to prove that there exists $c_{8}>0$ such that for any $x$,
\begin{align}
\mathbf{P}( F_x)\ge c_{8}. \label{T:1.1_1}
\end{align}

Define $n_i=2^i$, $\lambda_i=f(n_{i+1})+x$ , and $ E_i=E(n_i,\lambda_i) $. It is easy to see for any $x\in \mathbb{R}$, we can choose  $i_0=i_0(x)$ such that $0\le\lambda_i\le \frac{1}{2\alpha}\log n_i$ for $i\ge i_0$. According to Lemma \ref{L:3.2} and Lemma \ref{L:3.3},  there exists $c>0$ such that for any $i\ge i_0, j\ge i+2$,
\begin{align}
& \frac{1}{c} e^{-\lambda_i}\leq \mathbf{P}(E_i)\le c e^{-\lambda_i} ,\;\;i\ge i_0 ,\nonumber\\
&\mathbf{P}(E_i\cap E_j)\le c e^{-\lambda_i-\lambda_j}+ce^{-\lambda_j}\frac{\log n_i}{n_i^\frac{1}{\alpha}} . \nonumber
\end{align}
It follows that
\begin{align}
&\sum_{i=i_0}^k\mathbf{P}(E_i)\ge c\sum_{i=i_0}^ke^{-\lambda_i},\nonumber\\
&\sum_{i,j=i_0}^k\mathbf{P}(E_i\cap E_j)\leq c\Big(\sum_{i=i_0}^ke^{-\lambda_i}\Big)^2+c\Big(\sum_{i=i_0}^ke^{-\lambda_i}\Big)
\Big(\sum_{i=1}^\infty\frac{\log n_i}{n_i^{\frac{1}{\alpha}}}\Big).
\end{align}
Note that $\sum_{i=i_0}e^{-\lambda_i}\ge c \sum_{i=i_0}e^{-f(n_{i+1})}\ge c\sum_{i=i_0}\int_{n_{i+1}}^{n_{i+2}}
\frac{dt}{te^{f(t)}}=\infty$. Thus, we can find a constant $c_8$ (notice that our choice of $c_8$ does not depend on $x$) such that
\begin{align}
\limsup_{k\rightarrow\infty}\frac{\sum_{i,j=1}^k\mathbf{P}(E_i\cap E_j)}{(\sum_{i=1}^k\mathbf{P}(E_i))^2}\leq c_8.\nonumber
\end{align}
By Kochen and Stone's version of Borel-Cantelli Lemma \cite{B_C}, we have $\mathbf{P}(E_i,\;i.o.)\ge c_8$, which implies \eqref{T:1.1_1}. Let $F_\infty:=\cap_{x=1}^\infty F_x$. We have $\mathbf{P}(F_\infty)\ge c_8$ since $F_x$ are non-increasing on $x$. We then use the branching property to obtain
\begin{align}
\mathbf{P}(F_\infty|\mathcal{F}_k)=\mathbf{1}_{\{Z_k>0\}}(1-\prod_{|x|=k}(1-\mathbf{P}_{V(x)}(F_\infty)))
\leq \mathbf{1}_{\{Z_k>0\}} (1-(1-c_8)^{Z_k}).
\end{align}
Leting $k\rightarrow\infty$ in the above inequality, we conclude that
\begin{align}
\mathbf{1}_{F_\infty}=\mathbf{1}_{\{\mbox{non-extinction}\}}\;\;\mathbf{P}-a.s.
\end{align}
The divergence part (\ref{div}) is now proved.
\qed

\noindent\textbf{Proofs of Theorems 1.1--1.2.}  They are immediate by combining the above proofs of (\ref{conver}) and (\ref{div}).\qed

\noindent\textbf{Proof of Theorem 1.3} In Theorem~\ref{T:1.2}, taking $f(n)=\log\log n$ and $(1+\varepsilon)\log\log n$ for $\varepsilon>0$, we obtain the desired result. \qed

\section{Proof of Theorem 1.4}
\begin{lemma}\label{L:4.1}
Assume \eqref{stable0}, \eqref{stable1}--\eqref{stable3}. For any $\lambda>0$, there is $c_9>0$ such that for each $n\geq1$,
\begin{align}
\mathbf{P}\Big(M_n<\big(1+\frac{1}{\alpha}\big)\log n-\lambda\Big)\leq c_9 (1+\lambda)e^{-\lambda}. \nonumber
\end{align}
\end{lemma}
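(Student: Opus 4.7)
The plan is to reduce Lemma~\ref{L:4.1} to a random-walk estimate via a first-moment argument, with a truncation that forces the branching random walk to stay above $-\lambda$. Set $\bar a:=(1+\tfrac{1}{\alpha})\log n-\lambda$. First I would split
\begin{align*}
\mathbf{P}(M_n<\bar a)\;\leq\;\mathbf{P}\Big(\inf_{x\in\mathbf{T}}V(x)<-\lambda\Big)\;+\;\mathbf{P}\Big(M_n<\bar a,\ \inf_{x\in\mathbf{T}}V(x)\geq-\lambda\Big).
\end{align*}
The first summand is at most $e^{-\lambda}$ by the exponential bound $\mathbf{P}(\inf_{x\in\mathbf{T}}V(x)<-u)\leq e^{-u}$ (Aidekon~\cite{A13}, already used in the proof of Lemma~\ref{L:3.1}), and is absorbed into the target bound $c_9(1+\lambda)e^{-\lambda}$.

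On the complement every $|x|=n$ with $V(x)<\bar a$ automatically satisfies $\underline{V}(x)\geq-\lambda$, so Markov's inequality and the many-to-one formula \eqref{manytoone} give
\begin{align*}
\mathbf{P}\Big(M_n<\bar a,\ \inf_{x\in\mathbf{T}}V(x)\geq-\lambda\Big)\leq\mathbf{E}\Big(\sum_{|x|=n}\mathbf{1}_{\{V(x)<\bar a,\,\underline{V}(x)\geq-\lambda\}}\Big)=\mathbf{E}\big(e^{S_n}\mathbf{1}_{\{S_n<\bar a,\,\underline{S}_n\geq-\lambda\}}\big).
\end{align*}
Since $e^{\bar a}=n^{1+1/\alpha}e^{-\lambda}$, the task becomes to show that the last expectation is at most $c(1+\lambda)e^{-\lambda}$; equivalently, that the probability $\mathbf{P}(\underline{S}_n\geq-\lambda,\,S_n<\bar a)$ can be bounded in a way that reproduces exactly the rate $(1+\lambda)/n^{1+1/\alpha}$ carried by the right-hand sides of \eqref{S:3}--\eqref{S:5}.

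To extract that rate I would decompose $\{\underline{S}_n\geq-\lambda,\,S_n<\bar a\}$ according to the last index $\tau\in\{0,\dots,n\}$ at which $S_\tau>\bar a$ (interpreting $\tau=0$ when $S_i\leq\bar a$ for every $i$). On the event $\{\tau$ close to $n\}$ the walk stays above $\bar a$ until its final few steps, and the desired bound is immediate from \eqref{S:5}: its right-hand side is $c_5(1+\lambda)n^{-1-1/\alpha}$, which after multiplication by $e^{\bar a}$ produces exactly $c_5(1+\lambda)e^{-\lambda}$. For the remaining contributions, where $(S_i)$ has already dipped to $\leq\bar a$ some time before $n$, I would apply the strong Markov property at the first entry time into $(-\infty,\bar a]$, invoke the single-step left-tail estimate \eqref{stable1} to control the overshoot of the descending jump, and bound the two resulting walk segments by \eqref{S:3}--\eqref{S:4}.

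The main obstacle I anticipate is the bookkeeping of these remaining terms. A naive level decomposition of $\{S_n\in[-\lambda+k,-\lambda+k+1)\}$ combined with \eqref{S:4} yields a factor $\sum_{k=0}^{\lfloor\bar a+\lambda\rfloor}(2+k)^{\alpha-1}$ which, after multiplication by $e^{S_n}$, produces a parasitic $(\log n)^{\alpha-1}$ factor beyond the required $c(1+\lambda)e^{-\lambda}$. The whole point of \eqref{S:5} -- and of the more refined decomposition by the descent time $\tau$ -- is precisely to avoid this loss; balancing each piece so that every contribution comes out bounded by $c(1+\lambda)e^{-\lambda}$ with no logarithmic overshoot is the delicate technical step of the proof.
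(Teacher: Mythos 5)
The reduction at the heart of your plan fails quantitatively. After splitting off $\{\inf_{x\in\mathbf{T}}V(x)<-\lambda\}$ (which matches the paper), you propose to bound the remaining probability by the plain first moment $\mathbf{E}\big(e^{S_n}\mathbf{1}_{\{S_n<\bar a,\ \underline{S}_n\ge-\lambda\}}\big)$, with $\bar a=(1+\frac1\alpha)\log n-\lambda$, and then to show this is at most $c(1+\lambda)e^{-\lambda}$, equivalently $\mathbf{P}(\underline{S}_n\ge-\lambda,\,S_n<\bar a)\le c(1+\lambda)n^{-1-1/\alpha}$. Neither bound is true. The weight $e^{S_n}$ concentrates the expectation on endpoints within $O(1)$ of $\bar a$, and for a unit window at height $y\asymp\bar a$ the correct order of $\mathbf{P}(\underline{S}_n\ge-\lambda,\,S_n\in[y,y+1))$ is $(1+\lambda)(1+y+\lambda)^{\alpha-1}n^{-1-1/\alpha}$ --- this is exactly what \eqref{S:4} records, and in this small-height regime it is attained, not just an upper bound. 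Hence the expectation is genuinely of order $(1+\lambda)(\log n)^{\alpha-1}e^{-\lambda}$, and $\mathbf{P}(\underline{S}_n\ge-\lambda,\,S_n<\bar a)$ of order $(1+\lambda)(\log n)^{\alpha}n^{-1-1/\alpha}$ (cf.\ \eqref{S:3}). So the ``parasitic'' logarithm you flag is not bookkeeping slack: it is the true size of the quantity you reduced to, and no decomposition of the one-dimensional event (by the last time above $\bar a$, or otherwise) can remove it; note also that your ``first entry time into $(-\infty,\bar a]$'' is time $0$, since $S_0=0\le\bar a$, so that part of the decomposition is not even consistent with conditioning on a late last exit. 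Since the lemma itself is true, the factor $(\log n)^{\alpha-1}$ measures real overcounting in the branching random walk: a particle that is low at time $n$ typically has many low relatives, so the union bound must not be taken over all generation-$n$ particles.

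The paper's proof is built precisely to avoid this loss, and that mechanism is absent from your proposal. With $b=(1+\frac1\alpha)\log n-\lambda-1$ and a barrier at $-\beta$, a low particle either has its path above $b$ throughout $[\frac n2,n]$ --- then its endpoint is confined to the unit window $(b,b+1)$, and \eqref{S:4} gives $c(1+\beta)n^{-1-1/\alpha}$ with no logarithm (event $H_1$) --- or it is stopped at its first passage time $j\in[\frac n2,n]$ below $b$ (event $H_2$). For $H_2$ the many-to-one formula \eqref{manytoone} is applied twice: once to the stopped particles $v$ at generation $j$ (this is where \eqref{S:5}, resp.\ \eqref{S:3}, really enters, giving $(1+\beta)n^{-1-1/\alpha}$ for the late-$j$ range and the extra $O\big((\log n)^{2\alpha+1}n^{-1/\alpha}\big)$ term of \eqref{E:4.1} for $j\le\frac{3n}4$), and once more, via the branching/Markov property, to the expected number $\phi(V(v),n-j)$ of descendants of $v$ finishing below $b+1$ while staying above $-\beta$, estimated by \eqref{S:3}. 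It is the product of these two estimates, taken along a stopping line rather than at generation $n$, that yields $c(1+\beta)e^{-\lambda}$. Your plan stays entirely at the level of a single many-to-one computation over generation-$n$ particles and therefore cannot reach the stated bound; the missing idea is this two-stage (stopping-line plus subtree) decomposition.
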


\begin{proof}
  If we have proved that for any $\lambda,\beta>0$, there exists   $c$ such that for any $n\geq1$, 
\begin{align}
\mathbf{P}\Big(&M_n<\big(1+\frac{1}{\alpha}\big)\log n-\lambda, \min_{|u|\leq n}V(u)\geq-\beta\Big)\nonumber\\ &\leq c(1+\beta)e^{-\lambda}\bigg(1+\frac{\Big(1+\big(\beta+(1+\frac{1}{\alpha})\log n-\lambda\big)_+\Big)^{2\alpha+1}}{n^{\frac{1}{\alpha}}}\bigg), \label{E:4.1}
\end{align}
then by the following fact
\begin{align}
\mathbf{P}(\inf_{u\in \textbf{T}}V(u)<-\lambda)\leq e^{-\lambda}, \nonumber
\end{align}
we can obtain the proof of Lemma~\ref{L:4.1}.

 Now we turn to prove \eqref{E:4.1}. For brevity we write $b=\big(1+\frac{1}{\alpha}\big)\log n-\!\lambda-\!1$. Note that we can assume $b+1>-\beta$, otherwise there is nothing to prove for \eqref{E:4.1}. For  $|u|=n$ such that $V(u)<b+1$, either $\min_{\frac{n}{2}\leq j\leq n}V(u_j)>b$, or $\min_{\frac{n}{2}\leq j\leq n}V(u_j)\leq b$. For the latter case, we shall consider the first $j\in [\frac{n}{2}, n]$ such that $V(u_j)\leq b$. Then
\begin{align}
\mathbf{P}\Big(M_n<\big(1+\frac{1}{\alpha}\big)\log n-\lambda, \min_{|u|\leq n}V(u)\geq -\beta\Big)\leq \mathbf{P}( H_1 )+\mathbf{P}( H_2 ). \label{a0}
\end{align}
with
\begin{align}
& H_1 :=\Big\{\exists |u|=n: V(u)<b+1, \underline{V}(u)\geq -\beta, \min_{\frac{n}{2}\leq j\leq n}V(u_j)>b \Big\}, \nonumber\\
& H_2 :=\bigcup_{\frac{n}{2}\leq j\leq n} \Big\{\exists |u|=n: V(u)<b+1, \underline{V}(u)\geq -\beta, \min_{\frac{n}{2}\leq i< j}V(u_i)>b, V(u_j)\leq b \Big\} \nonumber.
\end{align}
By (\ref{manytoone}) and (\ref{S:4}), we have
\begin{align}
\mathbf{P}(H_1)\leq &\;\mathbf{E}\,\Big(\sum_{|u|=n}\textbf{1}_{\{V(u)<b+1,\, \underline{V}(u)\geq -\beta, \, \min_{\frac{n}{2}\leq j\leq n}V(u_j)>b\}}\Big)\nonumber\\
=&\;\mathbf{E}\,\Big(e^{S_n}\textbf{1}_{\{S_n<b+1,\, \underline{S}_n\geq-\beta, \,\min_{\frac{n}{2}\leq j\leq n}S_j>b\}}\Big) \nonumber\\
\leq &\;   c \,e^b(1+\beta)\,n^{-1-\frac{1}{\alpha}} \nonumber\\
\leq & \; c \,(1+\beta)\,e^{-\lambda}.\label{a1}
\end{align}

To deal with $\mathbf{P}(H_2)$, we consider $v=u_j$ and use the notation $  |u|_v:=|\mu|-|\nu|=n-j $ and $V_v(u):=V(u)-V(v)$ for $|u|=n$ and $v<u$. Then by the Markov property,
\begin{align}
&\mathbf{P}(H_2)  \nonumber\\
\leq & \sum_{\frac{n}{2}\leq j\leq n} \mathbf{E}\Big(\sum_{|v|=j}\textbf{1}_{\{\underline{V}(v)\geq -\beta, \min_{\frac{n}{2}\leq i<j}V(v_i)>b, V(v)\leq b\}}\!\!\sum_{|u|_v=n-j}\textbf{1}_{\{V_v(u)\leq b+1-V(v), \min_{j\leq i\leq n}V_v(u_i)\geq-\beta-V(v)\}}\Big) \nonumber\\
=&\sum_{\frac{n}{2}\leq j\leq n}\mathbf{E}\Big(\sum_{|v|=j}\textbf{1}_{\{\underline{V}(v)\geq -\beta, \min_{\frac{n}{2}\leq i<j}V(v_i)>b, V(v)\leq b\}}\phi(V(v),n-j)\Big) \label{a666}\\
=:&\mathbf{ E_\eqref{a666}} + \mathbf{E'_\eqref{a666}}  \label{a2}
\end{align}
where  $E_\eqref{a666}$  denotes the sum $\sum_{\frac{n}{2}\leq j\leq \frac{3n}{4}}$ and $E'_\eqref{a666}$ denotes the sum $\sum_{\frac{3n}{4}<j\leq n}$ in \eqref{a2}, and
\begin{align}
\phi(x, n-j):=&\mathbf{E}\Big(\sum_{|u|_v=n-j}\textbf{1}_{\{V_v(u)\leq b+1-V(v), \min_{j\leq i\leq n}V_v(u_i)\geq-\beta-V(v)\}}\Big|V(v)=x\Big)\nonumber\\
=&\mathbf{E}\Big(e^{S_{n-j}}\textbf{1}_{\{S_{n-j}\leq b+1-x,\underline{S}_{n-j}\geq-\beta-x\}}\Big).\nonumber
\end{align}
It  follows from \eqref{S:3} that
\begin{align}
\phi(x,n-j)\leq {\blue c}(1+\beta+x)(2+\beta+b)^\alpha(n-j+1)^{-1-\frac{1}{\alpha}}e^{b-x}. \label{E:4.4}
\end{align}
By  \eqref{E:4.4}, (\ref{manytoone}) and then (\ref{S:3}), we obtain that
\begin{align}
  E_\eqref{a666} \leq& \;{\blue c}\sum_{\frac{n}{2}\leq j\leq \frac{3n}{4}}(2+b+\beta)^\alpha n^{-1-\frac{1}{\alpha}}e^b\mathbf{E}\Big((1+\beta+S_j)\textbf{1}_{\{\underline{S}_j\geq-\beta, \min_{\frac{n}{2}\leq i<j}S_i>b, S_j\leq b\}}\Big)\nonumber\\
\leq& \;  c \,(2+b+\beta)^{\alpha+1}e^{-\lambda}\sum_{\frac{n}{2}\leq j\leq \frac{3n}{4}}\mathbf{P}(\underline{S}_j\geq-\beta,S_j\leq b)\nonumber\\
\leq&\;  c \,(1+\beta)(2+b+\beta)^{2\alpha+1}e^{-\lambda}\sum_{\frac{n}{2}\leq j\leq \frac{3n}{4}}j^{-1-\frac{1}{\alpha}}  \nonumber\\
\leq& \;  c \,(1+\beta)\frac{(1+\beta+(1+\frac{1}{\alpha})\log n-\lambda)^{2\alpha+1}}{n^{\frac{1}{\alpha}}}e^{-\lambda}. \label{a21}
\end{align}
Meanwhile, by the estimate $\phi(x,n-j)\leq e^{b+1-x}$, we get that
\begin{align}
 E'_\eqref{a666} \leq &\sum_{\frac{3n}{4}\leq j\leq n}\mathbf{E}\Big(\sum_{|v|=j}\textbf{1}_{\{\underline{V}(v)\geq-\beta, \min_{\frac{n}{2}\leq i<j}V(v_i)>b, V(v)\leq b\}}e^{b+1-V(v)}\Big)\nonumber\\
=& \;e^{b+1}\sum_{\frac{3n}{4}\leq j\leq n}\mathbf{P}\Big(\underline{S}_j\geq-\beta, \min_{\frac{n}{2}\leq i<j}S_i>b, S_j\leq b\Big)\nonumber\\
\leq &\,  c \,e^b(1+\beta)\,n^{-1-\frac{1}{\alpha}}\nonumber\\
\leq &\,  c \,(1+\beta)e^{-\lambda}.\label{a22}
\end{align}

Combing the estimates  (\ref{a0})--(\ref{a22}), we get \eqref{E:4.1}, and then complete the proof. \qed
\end{proof}

\noindent\textbf{Proof of Theorem \ref{T:1.4}}.
Consider large integer $j$. Let $n_j:=2^j$ and $\lambda_j:=a\log\log\log n_j$ with some constant $0<a<1$. Put
\begin{align}
  K_j :=\big\{M_{n_j}>(1+\frac{1}{\alpha})\log n_j+\lambda_j\big\}. \nonumber
\end{align}
Recall that if the system dies out at generation $n_j$, then  $M_{n_j}=\infty$. Define
$M_.^{(u)}$ for the subtree $\mathbb{T}_u$ just as $M_.$ for $\mathbb{T}$. Then   $$  K_j =\{ |u|=n_{j-1}, M_{n_j-n_{j-1}}^{(u)}> (1+\frac{1}{\alpha})\log n_j+\lambda_j-V(u)\}.$$  By the branching property at $n_{j-1}$ we obtain
\begin{align}
\mathbf{P^*}\big(  K_j \big|\mathcal{F}_{n_{j-1}}\big)=\prod_{|u|=n_{j-1}}\mathbf{P^*}\Big(M_{n_j-n_{j-1}}\geq
(1+\frac{1}{\alpha})\log n_j+\lambda_j-x\Big)\Big|_{x=V(u)}.\nonumber
\end{align}
By  Theorem \ref{T:1.3}, a.s. for all large $j$, $M_{n_{j-1}}\geq \frac{1}{2\alpha}\log n_{j-1}\sim cj$, hence $x\equiv V(u)\gg\lambda_j$ since $\lambda_j\sim a \log\log j$. By Lemma 4.1, on
$\{M_{n_{j-1}}\geq\frac{1}{2\alpha}\log n_{j-1}\}$, for some constant {\blue $c>0$} and all $|u|=n_{j-1}$,
\begin{align}
\mathbf{P^*}\Big(M_{n_j-n_{j-1}}<(1+\frac{1}{\alpha})\log n_j+\lambda_j-x\Big)\Big|_{x=V(u)}\leq {\blue c} V(u)e^{-(V(u)-\lambda_j)} .\nonumber
\end{align}
For sufficiently large $j$,  it follows that
\begin{align}
\mathbf{P^*}\big(K_j\big|\mathcal{F}_{n_{j-1}}\big)\geq &\textbf{1}_{\{M_{n_{j-1}}\geq \frac{1}{\,2\alpha}\log n_{j-1}\}}
\prod_{|u|=n_{j-1}}\Big(1-{\blue c} V(u)e^{-(V(u)-\lambda_j)}\Big)\nonumber\\
\geq &\textbf{1}_{\{M_{n_{j-1}}\geq \frac{1}{\,2\alpha}\log n_{j-1}\}} \exp\Big(-2{\blue c} \sum_{|u|=n_{j-1}}V(u)e^{-(V(u)-\lambda_j)}\Big)\nonumber\\
= &\textbf{1}_{\{M_{n_{j-1}}\geq \frac{1}{\,2\alpha}\log n_{j-1}\}} \exp\Big(-2{\blue c} e^{\lambda_j}D_{n_{j-1}}\Big).\nonumber
\end{align}
By \cite[Theorem 1.1]{sta}, $D_{n_{j-1}}\rightarrow D_\infty, \;a.s.$ Recalling $e^{\lambda_j}\sim(\log j)^a$ with $a<1$, we get that
\begin{align}
\sum_j\mathbf{P^*}\big(K_j\big|\mathcal{F}_{n_{j-1}}\big)=\infty, \quad a.s.\nonumber
\end{align}
which according to L$\acute{e}$vy's conditional form of Borel-Cantelli's lemma (\cite[Corollary 68]{Borel}), implies that $\mathbf{P^*}(  K_i, \mbox{i.o.})=1$. Thus
\begin{align}
\limsup_{n\rightarrow\infty}\frac{M_n-(1+\frac{1}{\alpha})\log n}{\log\log\log n}\geq a, \; \mathbf{P^*}-a.s.\nonumber
\end{align}
The proof is completed by letting $a\rightarrow 1$.
\qed

\end{document}